\newtheorem{prop}{Proposition}[section]
\newtheorem{thm}[prop]{Theorem}
\newtheorem{lem}[prop]{Lemma}
\newtheorem{cor}[prop]{Corollary}
\theoremstyle{definition}
\newtheorem{defn}[prop]{Definition}
\newtheorem{rem}[prop]{Remark}
\newtheorem{Note}[prop]{Note}
\newtheorem*{ack}{Acknowledgements}
\def\co{\colon\thinspace}
\newcommand{\talpha}{\tilde{\alpha}}
\newcommand{\calC}{\mathcal{C}}
\newcommand{\rmd}{\mathrm{d}}
\newcommand{\N}{\mathbb{N}}
\newcommand{\PD}{\mathrm{PD}}
\newcommand{\R}{\mathbb{R}}
\newcommand{\RP}{\mathbb{R}\mathrm{P}}
\newcommand{\wtR}{\widetilde{R}}
\newcommand{\rhod}{\rho_{\mathrm{dual}}}
\newcommand{\SL}{\mathrm{SL}}
\newcommand{\Z}{\mathbb{Z}}
\DeclareMathOperator{\Crit}{Crit}
\begin{document}

\author[H.~Geiges]{Hansj\"org Geiges}
\address{Mathematisches Institut, Universit\"at zu K\"oln,
Weyertal 86--90, 50931 K\"oln, Germany}
\email{geiges@math.uni-koeln.de}
\author[J.~Hedicke]{Jakob Hedicke}
\address{Centre de Recherches Math\'ematiques, Universit\'e de Montr\'eal,
Pavillon Andr\'e-Aisenstadt, 2920 Chemin de la tour, Montr\'eal H3T 1J4,
Canada}
\email{jakob.hedicke@gmail.com}
\author[M.~Sa\u{g}lam]{Murat Sa\u{g}lam}
\address{Mathematisches Institut, Universit\"at zu K\"oln,
Weyertal 86--90, 50931 K\"oln, Germany}
\email{msaglam@math.uni-koeln.de}

\title[Bott-integrable overtwisted contact structures]{Bott-integrability
of overtwisted contact structures}

\date{}

\begin{abstract}
We show that an overtwisted contact structure on a closed, oriented
$3$-manifold can be defined by a contact form having a
Bott-integrable Reeb flow if and only if the Poincar\'e dual of its Euler
class is represented by a graph link.
\end{abstract}

\subjclass[2020]{37J35, 37J55, 53D35, 57K33}

\keywords{Reeb dynamics, Bott integrability, graph link,
overtwisted contact structure}

\maketitle


\section{Introduction}
In this paper we continue our systematic study of Bott-integrable
Reeb flows on $3$-manifolds begun in~\cite{ghs24}. Slightly
streamlining the terminology used in our earlier paper, we 
speak of Bott integrability of contact forms and contact
structures in the following sense. Our $3$-manifolds are always understood
to be oriented, and our contact structures are coorientable and positive,
that is, $\xi=\ker\alpha$ with $\alpha\wedge\rmd\alpha>0$.

\begin{defn}
A contact form $\alpha$ on a compact $3$-manifold $M$ is \emph{Bott integrable}
if there is a Morse--Bott function $f\co M\rightarrow\R$ invariant
under the Reeb flow of~$\alpha$, that is, $\rmd f(R)=0$ for the
Reeb vector field $R$ of~$\alpha$.

A contact structure $\xi$ on $M$ is \emph{Bott integrable}
if there is some Bott-integrable contact form $\alpha$ with $\xi=\ker\alpha$.
\end{defn}

Our main result in the present paper gives a complete classification
of all Bott-integrable contact structures amongst the overtwisted
ones on any closed $3$-mani\-fold. To formulate it, we need
to explain the concept of a graph link.

Recall that a \emph{graph manifold} is a $3$-manifold that can be cut along
tori into Seifert fibred pieces. In \cite[Theorem~1.3]{ghs24}
we showed that a closed $3$-manifold admits a Bott-integrable contact structure
if and only if it is a graph manifold. We also gave examples
of non-integrable contact structures on graph manifolds, and examples
of $3$-manifolds (the $3$-sphere $S^3$, the $3$-torus $T^3$, and
$S^1\times S^2$) where every contact structure is Bott integrable.

A link in a $3$-manifold is called a \emph{graph link} if its exterior is
a graph manifold. In particular, the ambient manifold containing a graph
link must be a graph manifold, since it is obtained by gluing solid
tori to the link exterior.

\begin{thm}
\label{thm:main}
An overtwisted contact structure on a closed $3$-manifold is Bott integrable
if and only if the Poincar\'e dual of its Euler class can be represented
by a graph link.
\end{thm}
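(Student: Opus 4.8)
The plan is to prove the two implications by rather different means: necessity will hold for \emph{every} Bott-integrable contact structure, while sufficiency is where the overtwistedness hypothesis enters, through Eliashberg's classification of overtwisted contact structures up to isotopy by their homotopy class as plane fields.

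For necessity, suppose $\xi=\ker\alpha$ with $\alpha$ Bott integrable, and let $f\co M\rightarrow\R$ be a Reeb-invariant Morse--Bott function. Since $\rmd f(R)=0$ and $R$ is nowhere vanishing, each critical submanifold of $f$ is Reeb-invariant and foliated by Reeb orbits; as in \cite{ghs24} these submanifolds are circles and tori, and they cut $M$ into the Seifert pieces of a graph-manifold decomposition. The mechanism for reading off the Euler class is the section $X_f$ of $\xi$ defined by $\rmd\alpha(X_f,\cdot)=\rmd f$ on $\xi$: because $\rmd f$ annihilates $R$, one has $\rmd f=0$ if and only if $\rmd f|_\xi=0$, so $X_f$ vanishes \emph{exactly} along $\Crit(f)$. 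A small generic perturbation of $X_f$ then has zero set a link $L$ whose components are parallel copies (cables) of the critical Reeb orbits together with curves carried by the critical tori, each contained in the solid torus or collar of the corresponding piece of the decomposition. As $\PD(e(\xi))$ equals the class of this zero set and $L$ lies inside the graph structure of $M$, its exterior is a graph manifold and $L$ is a graph link.

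For sufficiency, suppose $\xi$ is overtwisted and $\PD(e(\xi))$ is represented by a graph link $L$. First I would isotope $L$ into a position adapted to a graph-manifold structure on $M$, with each component a fibre, exceptional fibre, or section of a Seifert piece, and then build---refining the constructions of \cite{ghs24}---a Bott-integrable contact form $\alpha_0$ whose critical Reeb orbits are the components of $L$, with multiplicities and signs chosen so that $\PD(e(\ker\alpha_0))=[L]$, hence $e(\ker\alpha_0)=e(\xi)$. The structures $\ker\alpha_0$ and $\xi$ now share an Euler class, so they differ only in the secondary homotopy invariant $d_3$. To correct this I would form the contact connected sum of $\ker\alpha_0$ with a Bott-integrable overtwisted $(S^3,\zeta)$, performed in a ball disjoint from $L$: since connected sum with $S^3$ leaves $M$ and $L$ unchanged and since $H^2(S^3)=0$, this preserves the Euler class; choosing $\zeta$ appropriately shifts $d_3$ to the value of $\xi$ (every contact structure on $S^3$ is Bott integrable, and these realise all values of $d_3$), and because the summand is overtwisted so is the result. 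A gluing lemma ensuring that such a connected sum stays within the Bott-integrable category then produces a Bott-integrable, overtwisted structure homotopic to $\xi$ as a plane field, and Eliashberg's theorem gives that it is isotopic to $\xi$; thus $\xi$ is Bott integrable.

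I expect the real difficulty to lie in the first step of sufficiency: realising an \emph{arbitrary} graph link---not merely its homology class---as the exact zero locus of a section of an $S^1$-invariant contact distribution. This forces one to put $L$ into a Seifert normal form and to match the local twisting of $\xi$ along each component to the prescribed multiplicity, which is where the contact-geometric input of \cite{ghs24} must be pushed hardest. The secondary delicate point is the gluing lemma for the $d_3$-adjustment: one must check that the connected sum genuinely preserves Bott integrability and that the overtwisted summands on $S^3$ sweep out the entire group of homotopy classes of plane fields with the given Euler class, so that any admissible $\xi$ is reached.
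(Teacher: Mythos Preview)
Your necessity argument is essentially the paper's: the section $Y$ of $\xi$ defined by $\alpha(Y)=0$, $i_Y\rmd\alpha=-\rmd f$ has zero locus $\Crit(f)$, and the signed count of its zeros along critical circles gives $\PD(e(\xi))$; critical surfaces (tori \emph{and} Klein bottles, which you omit) are handled by perturbing them into an elliptic--hyperbolic pair that cancels in homology. The graph-link conclusion follows because the complement of $L_f$ is fibred by circles on each building block of the Liouville decomposition.

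Your sufficiency argument, however, has a genuine gap. You say that once $e(\ker\alpha_0)=e(\xi)$, the two plane fields ``differ only in the secondary homotopy invariant $d_3$''. This is false when $H^2(M)$ has $2$-torsion: the primary obstruction satisfies $2d^2(\xi_1,\xi_2)=e(\xi_1)-e(\xi_2)$, so equal Euler classes only give $2d^2=0$, not $d^2=0$. The paper confronts this directly. It fixes an overtwisted Bott-integrable $\xi_0$ built from $H_1$-complete pieces, and then works with the obstruction class $d^2(\xi_0,\eta)$ rather than with the Euler class. The key observation is that Yano's theorem identifies the graph-link classes as exactly those killed by $\rho_*\co H_1(M)\to H_1(\calC_M)$; since $H_1(\calC_M)$ is \emph{free abelian}, from $2\rho_*\PD(d^2)=\rho_*\PD(e(\xi_0))-\rho_*\PD(e(\eta))=0$ one concludes $\rho_*\PD(d^2)=0$. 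Yano again then says $\PD(d^2)$ is representable by a link lying entirely in the Seifert pieces, and $H_1$-completeness lets one realise that class by elliptic critical orbits along which Lutz twists kill $d^2$ (since a Lutz twist along $K$ shifts $d^2$ by $-\PD[K]$). Only after this does the $S^3$ connected sum adjust the $3$-dimensional obstruction.

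A second, related issue: you aim to realise the \emph{given} graph link $L$ as the exact critical set of a Bott-integrable form. The paper explicitly does not attempt this---it works purely homologically via $H_1$-completeness of the Seifert pieces, and remarks that the elliptic sublink one actually produces need not coincide with $L$, only represent the same class. Your proposed step of putting $L$ in Seifert normal form and matching local twisting is therefore both harder than needed and not obviously achievable; the homological route through Yano's criterion and $H_1$-completeness is what makes the argument go.
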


\begin{rem}
Here is a brief comment on the dynamical significance of this
theorem. By a result of Paternain~\cite{pate91}, a Hamiltonian
flow (on a $3$-dimensional energy hypersurface) with an integral 
whose critical set consists of submanifolds has vanishing
topological entropy. In particular, this applies to Bott-integrable
Reeb flows. Macarini--Schlenk~\cite{masc11}, by contrast, have
shown the existence of contact structures that force entropy, 
in the sense that any associated Reeb flow has positive topological
entropy. Other results of this type are due to Alves~\cite{alve16}
and Alves--Colin--Honda~\cite{ach19}. In all cases, the
relevant property of the contact structure that forces entropy also
implies tightness of the contact structure.

In \cite[Question~1.5]{ach19}, the authors ask whether there
exists an \emph{overtwisted} contact structure in dimension~$3$
that forces entropy. As shown by Yau~\cite{yau06},
the contact homology of overtwisted contact structures vanishes.
This suggests that the orbit structure of overtwisted Reeb flows
may not be rich enough to establish forced entropy by the methods
of~\cite{alve16}.

In combination with the result of Paternain,
our Theorem~\ref{thm:main} gives a criterion for an overtwisted
contact structure \emph{not} to force entropy. As an example,
our results in Section~\ref{section:Seifert} show that for
Seifert manifolds the answer to \cite[Question~1.5]{ach19}
is negative: no overtwisted contact structure on a Seifert manifold
forces entropy.

On the other hand,
Theorem~\ref{thm:main} provides examples of overtwisted contact
structures on graph manifolds that force any associated Reeb flow
at least to be non-integrable (in the Bott sense);
see Section~\ref{section:cat}. Now, one of two things can
happen with these examples: Either (i) entropy is forced, providing
a positive answer to \cite[Question~1.5]{ach19}, or (ii) there is a Reeb flow
of zero topological entropy that cannot be approximated by
Bott-integrable Reeb flows. Case (ii) would go some way towards
answering a question of Katok~\cite[Problem~1]{kato09}, who wondered about
the existence of such approximations by Liouville-integrable systems.
\end{rem}

In Sections \ref{section:crit} and \ref{section:Euler} we prove
a few results that will be of more general interest for the study
of Bott-integrable Reeb flows: neighbourhood theorems
for surfaces contained in the critical set of the Bott integral,
perturbation results for such critical surfaces, and a formula
for the Euler class of a Bott-integrable contact structure
in terms of the critical periodic Reeb orbits.

This leads to a proof of the `only if' part of Theorem~\ref{thm:main},
which is in fact a general statement
about Bott-integrable contact structures, not only the overtwisted ones.
Given a Bott-integrable contact form with Bott integral~$f$, write
$L_f$ for the link of critical Reeb orbits. We then show
that $L_f$ is a graph link, and with the components oriented
depending on their elliptic or hyperbolic type, $L_f$ represents
the dual of the Euler class. The fact that this remains true even in
the presence of critical surfaces rests in an essential way on the
neighbourhood and perturbation theorems.

Given a compact graph manifold~$M$, there is a characterisation due
to Yano~\cite{yano85a,yano85b} of the homology classes in $H_1(M)$
that can be represented by a graph link. This is instrumental
for the proof of the `if' part of Theorem~\ref{thm:main}.
In Section~\ref{section:proof} we provide the necessary
background on $3$-manifold topology, notably the JSJ decomposition,
for understanding Yano's result.

In order to apply Yano's result in our setting, in
Section~\ref{section:Seifert} we show
how to realise any $1$-dimensional homology class on a given
compact Seifert fibred manifold by a sublink of the critical link of a
suitable Bott-integrable contact form. This also provides
an alternative contact geometric proof of an auxiliary result
in~\cite{yano85b}. For Seifert manifolds over a non-orientable base,
the proof of this $H_1$-completeness, as we call it, again
uses the perturbation result for critical surfaces. Furthermore, the
argument requires the fibre connected sum of Bott-integrable
contact forms, which is another general construction we introduce here.
A consequence of this discussion is a direct proof (not involving
Yano's results or any advanced $3$-manifold topology used to
establish Theorem~\ref{thm:main}) that every overtwisted contact structure
on any closed Seifert manifold is Bott integrable
(Corollary~\ref{cor:Seifert-ot}).

By Eliashberg's classification~\cite{elia89}, every homotopy class
of tangent $2$-plane fields on a closed, oriented $3$-manifold
contains a unique overtwisted contact structure up to isotopy.
Thus, together with the results of Yano, Theorem~\ref{thm:main}
allows one to give a complete homotopy theoretic classification, on a
given $3$-manifold, of the Bott-integrable overtwisted contact structures.
As an instructive example, in Section~\ref{section:cat} we classify the
Bott-integrable contact structures on the mapping torus of
Arnold's cat map, including the tight ones. This serves to illustrate
Yano's results and the $3$-manifold topology behind them,
and it extends our classification results from~\cite{ghs24}.
In particular, it gives further examples of contact structures (on
a graph manifold) that do \emph{not} admit Bott-integrable Reeb flows.
\section{The critical set}
\label{section:crit}
We consider a pair $(M,\alpha)$ consisting of a closed,
oriented $3$-manifold $M$ and a positive contact form~$\alpha$.
The Reeb vector field of $\alpha$ will be denoted by~$R$.

In \cite[Theorem~2.2]{ghs24} we proved the following
Liouville type theorem:
If $f$ is an integral for the Reeb flow, that is,
$\rmd f(R)=0$, then any regular level set of $f$ is
a $2$-torus~$T^2$, and in terms of circle coordinates $x_1,x_2$
on $T^2$ and a suitable transverse coordinate~$r$, the contact form
$\alpha$ is of Lutz type $\alpha=h_1(r)\,\rmd x_1+
h_2(r)\,\rmd x_2$, and the integral $f$ a function of~$r$. 

For the case when the integral $f$ is a Morse--Bott function,
we also established a neighbourhood theorem for isolated elliptic
Reeb orbits in the critical set $\Crit(f)$, that is,
an orbit where the Bott integral has a minimum or a maximum;
see \cite[Theorem~2.4]{ghs24}: In terms of suitable
coordinates $(\theta;r,\varphi)$ on a neighbourhood
$S^1\times D^2_{\delta}$ of the critical orbit $S^1\times\{0\}$,
the Bott integral $f$ equals $f(r)=c\pm r^2$, and up to an isotopic
deformation of the contact form in a neighbourhood
of the critical orbit---changing the Reeb flow but not the
Bott integral---the contact form is given by $\rmd\theta+
r^2\,\rmd\varphi$. Also, by Gray stability, the contact
structure $\xi=\ker\alpha$ remains unchanged, up to
a diffeotopy of~$M$, under this deformation.

In the present section we extend the arguments for achieving
those results in order to prove neighbourhood theorems
for tori and Klein bottles contained in $\Crit(f)$.
We then reason as in
\cite[Section~10.2]{ghs24} to remove those surfaces from the
critical set by a $C^{\infty}$-small perturbation
of the Bott integral~$f$, at the cost of introducing
an elliptic and a parallel hyperbolic Reeb orbit
into the critical set.

Taken together, these results will imply the following
genericity result.

\begin{thm}
Let $(M,\alpha)$ be a closed $3$-dimensional contact
manifold whose Reeb flow has a Bott integral~$f$. Then, after
an isotopic deformation of $\alpha$ and a $C^{\infty}$-small
deformation of $f$, both localised near the surfaces
contained in $\Crit(f)$, we may assume
that the Reeb flow has a Bott integral whose critical
set consists of isolated periodic Reeb orbits only.
\end{thm}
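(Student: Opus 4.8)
The plan is to first determine the possible shapes of $\Crit(f)$, and then eliminate the two-dimensional pieces one at a time by a local normalisation followed by a standard Morse modification. Since $\rmd f(R)=0$ holds identically, the Reeb flow $\phi_t$ preserves every level set and, on differentiating the invariance $f\circ\phi_t=f$, preserves $\Crit(f)$ itself; as $R$ is nowhere zero, it restricts to a nowhere-zero vector field tangent to each (closed) component of the critical set. This already rules out $0$-dimensional components, since an isolated critical point would be a zero of $R$, and by Poincar\'e--Hopf it forces every two-dimensional component to be a closed surface of Euler characteristic zero, i.e.\ a torus or a Klein bottle. The one-dimensional components are circles tangent to~$R$, hence periodic Reeb orbits. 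So the task is precisely to trade each torus and Klein bottle for a pair of isolated orbits.

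First I would establish a neighbourhood theorem for a critical torus $T\subset\Crit(f)$, extending the Lutz-type normal form and the isolated-elliptic-orbit model of \cite[Theorems~2.2 and 2.4]{ghs24}. After an isotopic deformation of $\alpha$ supported near~$T$---changing the Reeb flow but neither $f$ nor, by Gray stability, the contact structure $\xi$---I would bring $\alpha$ to the form $h_1(r)\,\rmd x_1+h_2(r)\,\rmd x_2$ on $S^1\times S^1\times(-\delta,\delta)$ with $T=\{r=0\}$ and $f=f(r)$, $f'(0)=0\neq f''(0)$, and moreover arrange $h_1'\equiv 0$ throughout the neighbourhood. This last normalisation is the crucial point: it makes $R\propto\partial_{x_1}$ on every nearby torus, so that all of these tori are foliated by parallel closed Reeb orbits and the transverse leaf space is the circle coordinate~$x_2$. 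Throughout such a deformation $f=f(r)$ remains a Bott integral, since any Reeb field of a Lutz-type form has no $\partial_r$-component. The content here is to realise the deformation through contact forms while matching the given germ of $(\alpha,f)$ on $\{|r|=\delta\}$; this is where the a priori arbitrary (possibly irrational) Reeb slope on~$T$ is rotated to the rational direction $\partial_{x_1}$.

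With this model in place, the perturbation follows \cite[Section~10.2]{ghs24}. Replacing $f(r)$ by $f(r)+\epsilon\,\chi(r)\,g(x_2)$, where $\chi$ is a cut-off equal to~$1$ near $r=0$ and $g\co S^1\to\R$ is Morse with one minimum and one maximum, keeps the function Reeb-invariant precisely because $R$ points in the $x_1$-direction, so the perturbed $f$ is still a Bott integral. A short Hessian computation in $(r,x_2)$, using that the mixed term vanishes on $\{r=0\}$, shows that $T$ is replaced by two nondegenerate critical circles at $\{r=0,\ g'(x_2)=0\}$; say $f$ has a transverse minimum along~$T$, so that the minimum of $g$ yields an elliptic orbit and the maximum a parallel hyperbolic orbit, as claimed. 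Taking $\epsilon$ small makes the deformation $C^\infty$-small and localised near~$T$.

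For a Klein bottle $K\subset\Crit(f)$ the same strategy applies after passing to the orientation double cover. In an oriented $3$-manifold $K$ is necessarily one-sided, so a neighbourhood is the (orientable) twisted $I$-bundle over~$K$ with torus boundary, doubly covered by $S^1\times S^1\times(-\delta,\delta)$ with a free orientation-preserving involution $\sigma(x,r)=(\tau(x),-r)$, where $\tau$ is the deck involution of the cover $T^2\to K$. I would run the torus argument $\sigma$-equivariantly: produce an invariant normal form, with $f$ even in~$r$, and a $\tau$-invariant Morse function $g$ on the leaf space, so that everything descends to the neighbourhood of~$K$. The main obstacle of the whole proof lives here and in the preceding normalisation: one must check that both the deformation of $\alpha$ and the choice of $g$ can be made compatible with the $\Z/2$-symmetry---so that the nowhere-zero Reeb direction and the Morse function on the leaf space survive the quotient---and then verify that the invariant critical points descend to exactly one elliptic and one parallel hyperbolic orbit. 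Granting the torus and Klein-bottle neighbourhood theorems of this section, assembling these local modifications over all surface components of $\Crit(f)$ produces the desired Bott integral with purely one-dimensional critical set.
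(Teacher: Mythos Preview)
Your proposal is correct and follows essentially the same route as the paper: establish Lutz-type neighbourhood models for critical tori and (equivariantly, on the double cover) for critical Klein bottles, deform $\alpha$ so that the Reeb slope becomes rational and $R$ points in a single circle direction near $r=0$, and then perturb $f$ by a cut-off times a Morse function of the remaining circle coordinate to replace the critical surface by an elliptic/hyperbolic pair. The paper carries this out with the explicit choices $h_1\equiv 1$, $h_2(r)=-r$ near $r=0$ and $g=\cos$, and for the Klein bottle first derives the concrete model $\alpha=\rmd s - r\,\rmd\theta$ before perturbing; your more abstract phrasing (general Morse $g$, ``$h_1'\equiv 0$'') is the same argument.
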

\subsection{Neighbourhood theorems for critical surfaces}
Since the Reeb vector field $R$ is tangent to the level sets
of a Bott integral~$f$, those level sets have to be periodic
Reeb orbits, tori, or Klein bottles. The vector field $Y$
on $M$ defined by
\begin{equation}
\label{eqn:Y}
\alpha(Y)=0,\;\;\; i_Y\rmd\alpha=-\rmd f,
\end{equation}
is non-singular and not collinear with~$R$ along
regular level sets, which in consequence have to be tori.
As critical level sets, all three types just mentioned can occur.
We now prove neighbourhood theorems for critical tori
and Klein bottles.

\begin{thm}[Neighbourhood theorem for critical tori]
\label{thm:critical-T}
Let $f$ be a Bott integral for the Reeb flow of $(M,\alpha)$,
and let $T^2\cong\Sigma\subset M$ be a component of a level set
$\{f=c\}$ along which the Bott integral has a local minimum
or maximum. Then a neighbourhood of $\Sigma$ in $M$ is diffeomorphic to
$[-\delta,\delta]\times T^2$ such that with coordinates $r\in[-\delta,\delta]$
and $x_1,x_2$ on $T^2=(\R/2\pi\Z)^2$, we have
\begin{enumerate}
\item[(i)] $f(r,x_1,x_2)=c\pm r^2$;
\item[(ii)] $\alpha=h_1(r)\,\rmd x_1+h_2(r)\,\rmd x_2$.
\end{enumerate}
\end{thm}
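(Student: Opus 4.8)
The plan is to adapt the method used for the Liouville-type theorem \cite[Theorem~2.2]{ghs24} and the elliptic-orbit neighbourhood theorem \cite[Theorem~2.4]{ghs24} to a critical torus. The essential new feature is that $\Sigma$ lies in the critical set, so the auxiliary field $Y$ of~\eqref{eqn:Y} degenerates along it: on $\Sigma$ we have $\rmd f=0$, hence $i_Y\,\rmd\alpha=0$, so $Y\in\ker\rmd\alpha=\langle R\rangle$; since also $\alpha(Y)=0$ while $\alpha(R)=1$, this forces $Y\equiv 0$ along $\Sigma$. Thus the commuting pair $R,Y$ (one checks $[R,Y]=0$, since $i_{[R,Y]}\rmd\alpha=-\rmd(\mathcal{L}_Rf)=0$ and $\alpha([R,Y])=R(\alpha(Y))=0$) that frames the regular tori survives only as the Reeb flow on $\Sigma$, and the crux will be to reconcile the two regular sides with this degenerate central fibre.

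First I would record the normal form for $f$. Since $M$ is oriented and $\Sigma\cong T^2$ is orientable, its normal bundle is trivial and $\Sigma$ admits a collar $[-\delta,\delta]\times\Sigma$. As $f$ attains a local extremum along the Morse--Bott manifold $\Sigma$, the Hessian in the one-dimensional normal direction is definite, and the Morse--Bott (parametrised Morse) lemma produces a transverse coordinate $r$ with $f=c\pm r^2$ (sign $+$ for a minimum, $-$ for a maximum), giving~(i). Note that $r$, not $f$, must serve as the transverse coordinate, since $\rmd f$ vanishes on $\Sigma$ and the Lutz coordinate of the Liouville theorem degenerates there.

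Next I would analyse the contact form on the tori. Because $R$ is tangent to each level torus with $i_R\,\rmd\alpha=0$, the restriction $\beta:=\alpha|_{\Sigma}$ is closed, and it is nowhere zero since $\beta(R)=\alpha(R)=1$; moreover $\mathcal{L}_R\beta=0$. The kernel foliation $\ker\beta$ then carries the transverse invariant measure $\beta$, so its holonomy is a circle diffeomorphism preserving a smooth positive density and is therefore smoothly conjugate to a rotation. Straightening the foliation accordingly yields angular coordinates $x_1,x_2$ on $\Sigma$ in which $\beta=h_1(0)\,\rmd x_1+h_2(0)\,\rmd x_2$ has constant coefficients. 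For $r\neq 0$ the same applies to the regular level $\{r\}\times\Sigma$, where in addition $R$ and $Y$ frame the torus in the classical Arnold--Liouville picture. Carrying out the straightening of the smoothly varying family $\alpha|_{\{r\}\times\Sigma}$ simultaneously, with the collar product fixing a common basis of $H_1$, gives coordinates $x_1,x_2$ independent of~$r$ and $\alpha=h_1(r)\,\rmd x_1+h_2(r)\,\rmd x_2$, which is~(ii).

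The main obstacle is this last step: making the parametrised straightening smooth across the central torus $r=0$, where the $\R^2$-action collapses to the Reeb flow alone. One must verify that the angular coordinates chosen on the two regular sides extend smoothly through $\Sigma$ rather than rotating, and that the coefficients $h_1,h_2$, which may be read off as the periods $\tfrac{1}{2\pi}\int_{\gamma_i}\alpha$ over the two standard cycles, are genuinely smooth in $r$ at $r=0$, with no square-root singularity inherited from the relation $f=c\pm r^2$. Since these periods are built directly from the smooth data $\alpha$ and the smooth collar coordinate $r$, and not from $f$, and since the collar furnishes a canonical, monodromy-free identification of all nearby tori with $\Sigma$, I expect smoothness to follow; establishing this carefully, together with the measure-theoretic straightening, is where the real work lies.
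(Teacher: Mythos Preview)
Your diagnosis of the difficulty is correct: the auxiliary field $Y$ of~\eqref{eqn:Y} vanishes along the critical torus, so the commuting pair $(R,Y)$ no longer frames the central fibre, and one must somehow bridge the two regular sides. But the fix is much simpler than the parametrised-straightening programme you outline, and it dissolves the smoothness worry you flag at the end.

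Once (i) is established and $f=c\pm r^2$ in a collar, observe that $\rmd f=\pm 2r\,\rmd r$. Hence $Y$ vanishes to exactly first order in~$r$, and the rescaled field $Y':=Y/r$ (for $r\neq 0$) extends smoothly across $r=0$ as a nowhere-zero section of~$\xi$. Equivalently, define $Y'$ directly by
\[ \alpha(Y')=0,\;\;\; i_{Y'}\rmd\alpha=\mp 2\,\rmd r; \]
this is a smooth, non-singular vector field tangent to the level tori and linearly independent of $R$ everywhere, including on~$\Sigma$. The pair $(R,Y')$ then frames \emph{all} the tori uniformly, and the argument of \cite[Theorem~2.2]{ghs24} goes through verbatim with $Y'$ in place of~$Y$: the flows of $R$ and $Y'$ build the covering $\R^2\to\{r\}\times T^2$ on each level, from which the angular coordinates and the Lutz form follow. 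No separate treatment of the central fibre, no patching of coordinates chosen on the two sides, and no smoothness issue for the periods---everything is automatically smooth in the smooth transverse coordinate~$r$.

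This renormalisation is the missing idea. Your approach via measure-theoretic straightening of each torus foliation, carried out fibrewise and then matched across $r=0$, is not wrong in spirit, but it reintroduces precisely the regularity question that the trick above sidesteps, and you have not indicated how you would actually resolve it.
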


Recall that the contact condition for a contact form of Lutz type
as in (ii) is
\[ \Delta:=\left|\begin{array}{cc}
h_1 & h_1'\\
h_2 & h_2'
\end{array}\right|<0,\]
and the Reeb vector field
\[ R=\frac{h_2'\partial_{x_1}-h_1'\partial_{x_2}}{\Delta}\]
is then tangent to the $T^2$-factors.

\begin{proof}[Proof of Theorem~\ref{thm:critical-T}]
Statement (i) follows from the generalised Morse--Bott lem\-ma
\cite[Lemma~1.7]{bofo04}. In order to prove (ii), we slightly modify
the argument for proving \cite[Theorem~2.2]{ghs24}.
There we constructed covering maps $\R^2\rightarrow\{r\}\times T^2$
by using the flow of $R$ and~$Y$. In the present situation,
$Y$ vanishes along the critical set $\{r=0\}$. We circumvent this
problem by working with the vector field $Y'$ defined by
\[ \alpha(Y')=0,\;\;\; i_Y\rmd\alpha=\mp 2\,\rmd r;\]
in other words, we replace $\rmd f$ in the defining equations
\eqref{eqn:Y} for $Y$ by $\rmd f/r=\pm2\,\rmd r$ for $r\neq 0$.
This is well defined also in $r=0$, since $r$ is a smooth coordinate
on $(-\delta,\delta)$. So $Y'$ coincides with $Y/r$ for $r\neq 0$
and extends smoothly as a non-vanishing vector field over $r=0$.
The remainder of the argument is now as in the proof
of \cite[Theorem~2.2]{ghs24}.
\end{proof}

Next we describe the neighbourhood of a critical Klein bottle~$K$.
Recall from \cite{geth23} or \cite[Section~10.1]{ghs24}
that a closed tubular neighbourhood $\nu K$ of a Klein bottle $K$
embedded in an orientable $3$-manifold, which is the unique
non-trivial interval bundle over~$K$, can be described by
\[ \nu K=\bigl([-\delta,\delta]\times\R\times S^1\bigr)/
(r,s,\theta)\sim(-r,s-1,-\theta).\]
We write the class of $(r,s,\theta)$ as $[r,s,\theta]$.

In \cite{ghs24} we already worked with the following local model
as an example, but we did not provide a proof that this model is
universal. In fact, it is only universal up to a rescaling
of the contact form and the Bott integral.

\begin{thm}[Neighbourhood theorem for critical Klein bottles]
\label{thm:critical-K}
Let $f$ be a Bott integral for the Reeb flow of $(M,\alpha)$,
and let $K\cong\Sigma\subset M$ be a component of a level set
$\{f=c\}$ along which the Bott integral has a local minimum
or maximum. Then, up to a rescaling of the Bott integral,
as well as an isotopy and a rescaling of the
contact form, all supported in a neighbourhood of $\Sigma$,
a smaller neighbourhood of $\Sigma$ in $M$ looks like $\nu K$
with
\begin{enumerate}
\item[(i)] $f\bigl([r,s,\theta]\bigr)=c\pm r^2$;
\item[(ii)] $\alpha=\rmd s -r\,\rmd\theta$.
\end{enumerate}
\end{thm}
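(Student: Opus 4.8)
The plan is to reduce the Klein bottle case to the torus neighbourhood theorem just proved by passing to the orientation double cover of $\nu K$, which is $[-\delta,\delta]\times T^2$. First I would establish statement (i): the generalised Morse--Bott lemma of \cite{bofo04} gives a transverse coordinate $r$ in which $f=c\pm r^2$, exactly as in the torus case, and since $K$ is a critical level set, $r=0$ along~$K$. The involution $(r,s,\theta)\sim(-r,s-1,-\theta)$ defining $\nu K$ flips the sign of~$r$, which is consistent with $f$ depending only on $r^2$; so $f$ descends to $\nu K$ in the stated form, and this is the easy half.

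For statement (ii), the orientation double cover of $K$ is a torus, and correspondingly the double cover $\widetilde{\nu K}=[-\delta,\delta]\times T^2$ of $\nu K$ carries the pulled-back contact form and Bott integral. On this cover I would apply Theorem~\ref{thm:critical-T} to put the pulled-back form into the Lutz normal form $h_1(r)\,\rmd x_1+h_2(r)\,\rmd x_2$. The next step is to track how the deck transformation $\tau\co[r,s,\theta]\mapsto[-r,s-1,-\theta]$ acts in these coordinates. In the universal cover $\R^2$ of each torus level, the coordinates $x_1,x_2$ are built from the flows of $R$ and the auxiliary field $Y'$ (as in the proof of Theorem~\ref{thm:critical-T}), and since $\tau$ maps the critical torus to itself while reversing orientation of the normal direction, it must act on $(x_1,x_2)$ by an element of $\mathrm{GL}(2,\Z)$ compatible with the quotient by the lattice; the $\tau$-invariance of $\alpha$ then forces the functions $h_1,h_2$ to satisfy symmetry conditions under $r\mapsto -r$. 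Reading off these conditions and combining them with the contact condition $\Delta<0$ should pin down $\alpha$ up to the claimed rescaling, after which a further isotopy supported near $\Sigma$ (using Gray stability as in \cite{ghs24}) brings it to the explicit model $\rmd s-r\,\rmd\theta$.

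I expect the main obstacle to be the bookkeeping around the deck transformation: one must verify that the normal-form coordinates on the double cover can be chosen equivariantly with respect to~$\tau$, so that the resulting form actually descends to $\nu K$ rather than merely to the cover. The subtlety is that the identification $(s,\theta)\sim(s-1,-\theta)$ reverses the sign of~$\theta$, so the two "circle" directions on the boundary torus play asymmetric roles under~$\tau$; this is precisely why the model $\rmd s-r\,\rmd\theta$ is not symmetric in its two coordinates and why only a rescaling (not an arbitrary $\mathrm{GL}(2,\Z)$ change) survives the quotient. Checking that the Lutz form degenerates in exactly the way compatible with $\tau$, and that the leftover freedom is a single scaling of $(\alpha,f)$, is the delicate point; the rest follows the template of \cite[Theorem~2.2]{ghs24} and the isotopy argument for elliptic orbits already used for \cite[Theorem~2.4]{ghs24}.
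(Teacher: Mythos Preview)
Your overall strategy---lift to the orientable double cover $[-\delta,\delta]\times T^2$ and exploit the $\Z_2$-symmetry---matches the paper's, and you have correctly put your finger on the crux: the normal-form coordinates must be chosen $\tau$-equivariantly so that everything descends to~$\nu K$.

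The gap is in your proposed execution. You plan to apply Theorem~\ref{thm:critical-T} to the cover as a black box and then track how $\tau$ acts on the resulting flow-built coordinates $(x_1,x_2)$. The paper explicitly warns that this does not work directly: one needs a $\Z_2$-equivariant version of the torus theorem \emph{and} a sharper Lutz form than it provides. Rather than retrofitting equivariance, the paper works throughout in the coordinates $(r,s,\theta)$ in which $\tau$ is already $(r,s,\theta)\mapsto(-r,s-1,-\theta)$. The $T^2$-invariance argument then gives the lifted form as $\talpha=h_0(r)\,\rmd r+h_1(r)\,\rmd s+h_2(r)\,\rmd\theta$, with $\tau$-invariance forcing $h_0,h_2$ odd and $h_1$ even (so in particular $h_1(0)\neq 0$, since $\wtR$ must be a multiple of $\partial_s$ along $r=0$). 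The normalisation then follows the three-step scheme of \cite[Theorem~2.4]{ghs24}, checking parity at each stage: deform $h_0$ to zero within odd functions via a strict contact isotopy along the level tori; divide by an even cutoff of $h_1$ to make it $1$ near $r=0$ (this is the rescaling of~$\alpha$); and interpolate the odd $h_2$ to $-r$ via the flow of $a(r)\,\partial_r$ with $a$ odd. These three concrete equivariant steps are what your sketch needs in place of ``apply Theorem~\ref{thm:critical-T} and then sort out~$\tau$''.
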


\begin{proof}
Again, statement (i) follows from the generalised Morse--Bott lemma.

The orientable double cover of $\nu K$ is given by
\[ [-\delta,\delta]\times T^2=
\bigl([-\delta,\delta]\times\R\times S^1\bigr)/
(r,s,\theta)\sim(r,s-2,\theta).\]
The contact form $\alpha$ and the Bott integral $f$ lift to this
double cover. In this lift, $\{0\}\times T^2=\{r=0\}$ is a component of
the critical set. Now, we cannot directly quote the previous result,
since we need to have a $\Z_2$-equivariant version of that
neighbourhood theorem, and we want to achieve a more
specific Lutz form. So we need to dig a little deeper into the
proof of the neighbourhood theorem for elliptic critical
orbits \cite[Theorem~2.4]{ghs24}, which contains the relevant ideas.
At each step in the following argument, we deal with
a $\Z_2$-equivariant isotopy or a $\Z_2$-invariant rescaling of the
contact form, all of which are supported in a neighbourhood
of~$\{r=0\}$. So the actual normal form we finally arrive at is
defined on a neighbourhood of $K$ defined by a smaller
$\delta$ than the one we started with.

First of all, as before one shows that the lifted contact form
$\talpha$ can be written as
\[ \talpha=h_0(r)\,\rmd r+h_1(r)\,\rmd s
+h_2(r)\,\rmd\theta.\]
Notice that the Reeb vector field $\wtR$ of $\talpha$
is tangent to the tori $\{r=\text{const.}\}$.
(In the proof of the previous theorem, the coordinates $x_1,x_2$ on the
torus factor are determined by a choice of basis of the lattice defining
the covering $\R^2\rightarrow T^2$; by applying a suitable element
of $\SL(2,\Z)$, we may assume these coordinates to be $(s,\theta)\in
(\R/2\Z)\times S^1$.)

The invariance under the $\Z_2$-action
\[ (r,s,\theta)\longmapsto (-r,s-1,-\theta) \]
on $[-\delta,\delta]\times T^2$ translates into
\[ h_0(-r)=-h_0(r),\;\;\; h_1(-r)=h_1(r),\;\;\;
\text{and}\;\;\; h_2(-r)=-h_2(r).\]
In particular, we have $h_0(0)=0$. As in Step~1 of the
proof of \cite[Theorem~2.4]{ghs24}, we first deform $h_0$,
relative to a neighbourhood of $\{\pm\delta\}\times T^2$,
so that it becomes identically zero near $r=0$; this can be done
within the class of odd functions. The convex linear
interpolation between the old and the deformed $h_0$
gives rise to a strict contact isotopy, defined by a flow
along the level surfaces $\{r=\text{const.}\}$, and stationary
on $\{r=0\}$, just like in the proof of
\cite[Theorem~2.2]{ghs24}; see equation (3) in that proof
and the argument following it.

The $\Z_2$-invariance of $\wtR$, which depends
on $r$ only, forces the Reeb vector field
to be a multiple of $\partial_t$ along $\{r=0\}$. This implies
that $h_1(0)\neq 0$, and by choosing the signs of the
coordinates $r,s$ appropriately,
we may assume that $h_1(0)>0$ and that the ambient
orientation is given by $\rmd r\wedge\rmd s\wedge\rmd\theta$. Then as
in Step~2 of the proof of \cite[Theorem~2.4]{ghs24}, we can make a
further deformation such that the contact form looks like
$\rmd s +h_2(r)\,\rmd\theta$ (with a modified, but still odd~$h_2$)
near $\{r=0\}$: for this we need to divide the contact
form by an even function $\chi(r)$ equal to
$h_1(r)$ near $r=0$, and identically equal to $1$
outside a small neighbourhood of $r=0$; so the
contact structure remains unchanged.
This deformation changes the Reeb vector field,
keeping it tangent to the tori $\{r=\text{const.}\}$.
Thanks to the contact condition, $h_2$ satisfies
$h_2'<0$.

Finally, as Step~3, we choose an odd function $h_2^*(r)$
(with negative derivative)
that equals $-r$ near $r=0$, and $h_2$ outside a small
neighbourhood of $r=0$. Then the convex linear
interpolation between $h_2$ and $h_2^*$ defines
a $\Z_2$-invariant deformation of $\talpha$ into the desired normal
form, which descends to the $\Z_2$-quotient $\nu K$.
One may check that this deformation of contact forms is induced by
the flow of a vector field $X=a(r)\partial_r$, with $a(r)$ an odd function.
In particular, this flow is stationary on $\{r=0\}$
and sends level sets of the Bott integral to level sets.
\end{proof}
\subsection{Removing critical surfaces}
\label{subsection:remove}
For simplicity of notation, we assume that we are dealing
with a critical torus or Klein bottle along which $f$
has an isolated local minimum, and that the value
of $f$ at the minimum is~$0$. In other words,
we are dealing with $f(r,x_1,x_2)=r^2$ in the case
of the torus, and with $f\bigl([r,s,\theta])=r^2$
in the case of a Klein bottle.

For the Klein bottle, we explained in \cite[Section~10.2]{ghs24}
how to perturb the Bott integral $f$ into one that has
two isolated critical Reeb orbits, one elliptic and one
hyperbolic, in place of the critical Klein bottle.

We briefly recall the construction for the Klein bottle,
and then show how to adapt it to the torus case.
We choose $\varepsilon >0$ such that $2\varepsilon<\delta$.
Let $\chi\co [-\delta,\delta]\rightarrow[0,\varepsilon^2]$
be a smooth function identically equal to $\varepsilon^2$
on the interval $[-\varepsilon,\varepsilon]$, and identically
equal to $0$ on the intervals $[-\delta,-2\varepsilon]$
and $[2\varepsilon,\delta]$. We may assume that
$|\chi'(r)|<|2r|$.

Recall that the Reeb vector field $R$ equals $\partial_s$
on the neighbourhood $\nu K$ of the critical
Klein bottle, so the function
\[ [r,s,\theta]\longmapsto r^2+\chi(r)\cos\theta\]
is still an integral of~$R$. A straightforward
computation, see~\cite{ghs24}, shows that this function
is a Bott integral with an elliptic critical orbit
at $(r,\theta)=(0,\pi)$, and a critical
hyperbolic orbit at $(r,\theta)=(0,0)$.

In the torus case, we modify the contact form
$\alpha=h_1(r)\,\rmd x_1+h_2(r)\,\rmd x_2$ near
$\{r=0\}$ in a way described in the proof of
\cite[Proposition~2.6]{ghs24}. By a small reparametrisation
of the curve $r\mapsto(h_1(r),h_2(r))$ we may assume that
$h_1(0)$ and $h_2(0)$ are rationally dependent.
Then, up to an $\SL(2,\Z)$-transformation
we may assume that $h_1(0)>0$ and $h_2(0)=0$,
which implies $h_2'(0)<0$. After a small deformation
and a further reparametrisation of the curve $r\mapsto(h_1(r),h_2(r)$,
up to a global constant we may finally assume that $h_1$ is identically
equal to $1$ near $r=0$, and $h_2(r)=-r$ near $r=0$. These modifications
change the Reeb vector field, keeping it
tangent to the tori $\{r=\text{const.}\}$, so $f$ is still a Bott integral.
Moreover, near $\{r=0\}$ the new Reeb vector field
equals~$\partial_{x_1}$, so the new Bott integral
\[ (r,x_1,x_2)\longmapsto r^2+\chi(r)\cos x_2\]
has the properties analogous to the previous case.
\section{The Euler class}
\label{section:Euler}
In this section we prove the easier direction of Theorem~\ref{thm:main}.
Given a contact form~$\alpha$ with Bott integral $f$ on a closed
$3$-manifold~$M$, we write $L_f\subset\Crit(f)$ for the link made up
of the critical periodic Reeb orbits. We show that the Poincar\'e dual
of the Euler class of the contact structure $\ker\alpha$ is represented
by~$L_f$, and that $L_f\subset M$ is a graph link.
\subsection{Euler class and critical orbits}
For the case that $L_f=\Crit(f)$, that is, in the absence of
critical surfaces, the following result was stated implicitly in the proof of
\cite[Lemma~7.2]{ghs24}. Here and throughout this paper, (co-)homology
groups are understood to be with $\Z$-coefficients.

\begin{lem}
\label{lem:Euler-formula}
Let $f$ be a Bott integral for $(M,\alpha)$. Write $E\subset L_f$ for the
set of elliptic orbits, and $H\subset L_f$ for the set of hyperbolic
orbits. We orient the critical orbits by the Reeb flow.
Then the Euler class of $\xi=\ker\alpha$ is given by
\[ e(\xi)=\sum_{\gamma\in E}\PD([\gamma])-\sum_{\gamma\in H}
\PD([\gamma])\in H^2(M).\]
\end{lem}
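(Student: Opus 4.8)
The Euler class $e(\xi)$ is computed by taking any section of $\xi$ that is transverse to the zero section and counting its zeros with sign, Poincaré-dualised. The plan is to build such a section explicitly from the Bott-integral structure, so that its zero set is exactly the critical link $L_f$, and then to check that the sign attached to each component matches the elliptic/hyperbolic dichotomy.

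First I would produce a convenient trivialising framework. Away from the critical set $\Crit(f)$, the Reeb field $R$ is tangent to the regular level tori, and the vector field $Y$ from equation~\eqref{eqn:Y} is a section of $\xi$ that is nonzero and not collinear with $R$ there; but $Y$ need not lie in $\xi$ tangentially in a useful way near the orbits, so instead I would work directly with a section $\sigma$ of $\xi$. The natural candidate is built from the $\rmd f$-data: since $f$ is Reeb-invariant, $\rmd f$ vanishes on $R$, and its $\xi$-component defines a section of $\xi^*$, equivalently (via $\rmd\alpha|_\xi$, which is a symplectic, hence area, form on the oriented plane field $\xi$) a section $\sigma$ of $\xi$ that vanishes precisely on $\Crit(f)$. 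Restricting attention to the complement of the critical surfaces, whose neighbourhoods are governed by Theorems~\ref{thm:critical-T} and~\ref{thm:critical-K}, the zero set of $\sigma$ is the link $L_f$. The key local computation is then to evaluate the local degree (the index of $\sigma$ along the normal disc) at each critical orbit.

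The heart of the argument is this local index computation, and it is where the elliptic/hyperbolic distinction enters. Near an elliptic orbit, Theorem~\ref{thm:critical-K}-style normal forms (or the elliptic model from \cite[Theorem~2.4]{ghs24}) give $f=c\pm r^2$ with $\sigma$ behaving like the gradient of $\pm r^2$ in the transverse disc; a Morse minimum or maximum contributes local index $+1$, matching $\PD([\gamma])$ with the Reeb orientation. Near a hyperbolic orbit, $f$ has a saddle-type transverse behaviour, so $\sigma$ winds with the opposite sign, contributing $-1$ and hence $-\PD([\gamma])$. I would carry out the index count carefully in the given coordinates, tracking how the Reeb orientation of the orbit interacts with the coorientation of $\xi$ and the chosen area form $\rmd\alpha|_\xi$, since the sign of the whole formula hinges on these conventions being consistent.

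The main obstacle I expect is ensuring that $\sigma$ is everywhere a genuine section of $\xi$ with isolated, nondegenerate transverse zeros of the claimed indices, especially reconciling the behaviour near the critical surfaces that Theorems~\ref{thm:critical-T} and~\ref{thm:critical-K} describe. Rather than fight the surfaces directly, I would invoke the genericity theorem (Theorem~\ref{thm:critical-K}'s surrounding result: the removal construction of Subsection~\ref{subsection:remove}) to replace each critical torus or Klein bottle by one elliptic and one hyperbolic orbit; since these are introduced as an elliptic--hyperbolic \emph{pair} carrying opposite signs, their net contribution to the right-hand side vanishes, while the perturbation is $C^\infty$-small and localised, hence leaves $e(\xi)$ unchanged. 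Thus it suffices to prove the formula in the surface-free case $L_f=\Crit(f)$, where the section $\sigma$ has isolated zeros and the local index calculation above directly yields the stated sum.
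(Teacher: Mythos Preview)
Your proposal is correct and follows essentially the same route as the paper: build a section of $\xi$ out of the $\rmd f$-data, observe its zero set is $\Crit(f)$, compute local indices via the Morse normal forms ($+1$ elliptic, $-1$ hyperbolic), and handle critical surfaces by the removal construction of Subsection~\ref{subsection:remove}, where the new elliptic--hyperbolic pair are mutually isotopic and hence cancel.

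One expository point worth cleaning up: the section $\sigma$ you construct---the $\rmd\alpha|_\xi$-dual of $\rmd f|_\xi$---is precisely the vector field $Y$ of~\eqref{eqn:Y}, up to sign. The defining equations $\alpha(Y)=0$ and $i_Y\rmd\alpha=-\rmd f$ say exactly that $Y$ is a section of $\xi$ and is the symplectic dual of $-\rmd f|_\xi$. So your dismissal of $Y$ (``need not lie in $\xi$ tangentially in a useful way'') is misplaced; $Y$ already does the job, and the paper uses it directly.
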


\begin{proof}
The vector field $Y$ defined in \eqref{eqn:Y} can be read as a
section $Y\co M\rightarrow\xi$ of the $2$-plane bundle $\xi$. The
intersection of $Y(M)$ with the zero section $0_{\xi}$ of $\xi$ equals
$\Crit(f)$.

(i) We first consider the case that there are no critical surfaces.
From the normal form of the Bott integral $f$ in terms of
transverse coordinates $x,y$ near critical orbits, $\pm(x^2+y^2)$
near elliptic orbits and $xy$ near hyperbolic ones, it follows that
$Y(M)$ intersects $0_{\xi}$ transversely. Both $Y(M)$ and $0_{\xi}$
carry the orientation induced by~$M$, and $\xi$ the orientation
given by~$\rmd\alpha$. This defines an orientation on
$L_f=Y(M)\cap 0_{\xi}$. As in the $2$-dimensional Poincar\'e--Hopf
theorem one then sees that for $\gamma\in E$, this orientation coincides
with the direction of the Reeb flow; for $\gamma\in H$ it is the
opposite orientation. Regarding this last statement, observe that the
$R$-invariance of $f$ allows us to reduce the reasoning to the
consideration of a local surface of section of the Reeb flow
near the critical Reeb orbit, oriented by the area form~$\rmd\alpha$.

(ii) Now suppose that $\Crit(f)$ does contain critical surfaces. As we have
seen in Section~\ref{section:crit}, any surface contained in $\Crit(f)$ can be
replaced by a pair of mutually isotopic critical Reeb orbits, one elliptic
and one hyperbolic, so in sum they do not contribute to the Euler class.
This means that the formula for $e(\xi)$ remains true in this general case.
\end{proof}
\subsection{The link of critical orbits}
With the next lemma we complete the proof of the `only if' part
of Theorem~\ref{thm:main}. Note that this is a general statement
about Bott-integrable contact forms; overtwistedness of the
contact structure is not needed here.

\begin{lem}
\label{lem:Lf-graph}
Let $f$ be a Bott integral for $(M,\alpha)$. Then $L_f\subset M$ is a
graph link.
\end{lem}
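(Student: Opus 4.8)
The plan is to exhibit the link exterior $M\setminus\nu L_f$ as a graph manifold by decomposing $M$ along the regular level tori of the Bott integral~$f$ and showing that each resulting piece is Seifert fibred by the Reeb flow. The starting point is the Liouville-type structure already recorded in the excerpt: away from the critical set, every regular level set of $f$ is a torus $T^2$, and on a neighbourhood of such a torus the contact form is of Lutz type $\alpha=h_1(r)\,\rmd x_1+h_2(r)\,\rmd x_2$, with the Reeb vector field $R$ tangent to the torus factors. Thus the Reeb flow preserves each regular level torus and acts on it as a linear flow whose slope is controlled by $h_1,h_2$. I would first cut $M$ along a finite collection of regular level tori, one for each regular value separating distinct critical values of~$f$, together with tori flanking each critical level; after passing to the interiors by removing tubular neighbourhoods of the critical orbits in $L_f$, this expresses the link exterior as a union of compact pieces glued along tori.

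The heart of the argument is to verify that each piece carries a Seifert fibration. There are two kinds of pieces. The generic pieces are products $[a,b]\times T^2$ swept out by regular level tori; here $\alpha$ is of Lutz type throughout, the Reeb flow is tangent to the tori, and I would Seifert-fibre such a piece by the orbits of the circle action coming from a rationally-approximated linear flow, or more simply observe that $[a,b]\times T^2$ is a graph manifold piece outright. The delicate pieces are the neighbourhoods of the critical orbits and critical surfaces, where Theorems~\ref{thm:critical-T} and~\ref{thm:critical-K} and the normal form near elliptic and hyperbolic orbits supply explicit local models. Near an elliptic critical orbit the neighbourhood $S^1\times D^2$ is a solid torus, which is Seifert fibred and gets removed as part of $\nu L_f$. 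Near a critical torus the normal form $[-\delta,\delta]\times T^2$ is again a product piece. Near a critical Klein bottle the model $\nu K$ from Theorem~\ref{thm:critical-K}, being the twisted interval bundle over~$K$, is a standard Seifert piece; since by Section~\ref{subsection:remove} we may instead assume the critical surfaces have been perturbed away into pairs of critical orbits, I would actually reduce to the case $L_f=\Crit(f)$ with no critical surfaces present, so that every critical level is a union of orbits, and the exterior decomposes purely into Lutz-type product regions glued along tori.

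The main obstacle I anticipate is organising the gluing so that the Seifert fibrations on adjacent pieces are compatible with the graph-manifold structure, rather than merely fibring each piece in isolation. Two product regions meeting at a regular level torus carry Reeb flows with a priori different slopes, and the gluing torus must be incompressible in the exterior for the JSJ/graph decomposition to be genuine. I would address incompressibility by noting that the level tori are $\pi_1$-injective away from the solid-torus neighbourhoods of elliptic orbits, and handle the slope bookkeeping by recording the fibration data as the holonomy of the Reeb flow on each separating torus; collapsing any trivial (solid-torus) pieces then leaves a graph whose vertices are Seifert pieces and whose edges are the separating tori. Invoking the definition of a graph link---that its exterior be a graph manifold---this completes the proof that $L_f$ is a graph link.
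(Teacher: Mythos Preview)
Your overall strategy---cut along regular level tori and verify that each piece is Seifert fibred---is sound, and it is essentially what the paper does by invoking the Bolsinov--Fomenko building-block decomposition of the Liouville foliation. However, your execution contains a genuine gap: you never deal with hyperbolic critical orbits. In the second paragraph you discuss elliptic orbits, critical tori, and critical Klein bottles, and then assert that after perturbing away the surfaces ``the exterior decomposes purely into Lutz-type product regions glued along tori.'' This is false. Take a hyperbolic orbit $\gamma$ at a critical value~$c$. The regular levels $\{f=c\pm\epsilon\}$ flanking it consist of several tori, and the region they bound---a neighbourhood of the singular level containing~$\gamma$---is not $T^2\times I$ but rather $P\times S^1$ (or a $\Z_2$-quotient thereof), where $P$ is a planar surface carrying a Morse saddle. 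Removing a tubular neighbourhood of $\gamma$ from this piece still leaves a Seifert manifold, namely $(P\setminus D^2)\times S^1$, but it is not a product of a torus with an interval. These ``saddle atoms'' are exactly the non-trivial building blocks in the Bolsinov--Fomenko description cited in the paper's proof; once you acknowledge them and observe that they are Seifert fibred by the obvious $S^1$-factor (not by the Reeb flow, whose orbits are generically irrational), the argument goes through.

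A secondary remark: your third paragraph worries about incompressibility of the splitting tori and compatibility of the Seifert fibrations across them. Neither is needed. The definition of a graph manifold only asks that the manifold can be cut along \emph{some} collection of embedded tori into Seifert pieces; the tori need not be incompressible, and the fibrations on adjacent pieces need not match. So that entire discussion can be dropped.
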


\begin{proof}
In \cite[Section~3.2]{ghs24} we gave a summary of the results
from \cite{bofo04} concerning the (singular) Liouville foliation defined by
the level sets of a Bott integral. The key statement is that
up to a diffeomorphism preserving the Liouville foliation,
any Bott-integrable Reeb flow can be constructed from some simple building
blocks, which are products of a surface (with boundary) with~$S^1$,
or Seifert bundles with singular fibres of order~$2$, coming from
fixed points of an involution on the surface. The Liouville foliation
on these building blocks is defined by the level sets of a Morse--Bott
function lifted from a Morse--Bott function on the base, with all
fixed points of the involution contained in the critical set of that
Morse function.

From this description of the building blocks one sees that the complement
of tubular neighbourhoods of the critical orbits is fibred by circles.
So the exterior of~$L_f$, which is the the manifold obtained by gluing
these complements, is a graph manifold.
\end{proof}

\begin{rem}
In \cite[Section~3.2.3]{ghs24}, we wrote that the Morse--Bott functions
on the Seifert fibred building blocks are lifted from Morse functions
on the base, when in fact we should have written "Morse--Bott functions
on the base". The neighbourhood $\nu K$ of a critical Klein bottle is
a case in point. As described in detail in~\cite{geth23}, $\nu K$
may be thought of as the Seifert bundle over a disc with two singular fibres
of order~$2$. This corresponds to starting with the annulus
$[-\delta,\delta]\times S^1$ with Morse--Bott function
$f(r,\theta)=r^2$ and involution $\tau(r,\theta)=(-r,-\theta)$
having two fixed points. One then obtains $\nu K$ with the described Seifert
fibration as the mapping torus of~$\tau$.

Critical tori may also be produced by the gluing of the building blocks
along torus boundaries.
\end{rem}
\section{Seifert manifolds}
\label{section:Seifert}
In \cite[Theorem~1.8]{ghs24} we showed that every contact structure
on a closed Seifert manifold invariant under the fixed-point free
$S^1$-action defining the Seifert fibration is Bott integrable.
In the present section we deal with overtwisted (but not necessarily
$S^1$-invariant) contact structures on Seifert manifolds.

Our aim will be to find a Bott-integrable contact structure on any given
Seifert manifold that is sufficiently `rich' in the following sense.
When we speak of a Bott-integrable contact structure on a manifold
with boundary, it is understood that the boundary components are tori,
and the contact structure is of Lutz type on collar neighbourhoods
of those boundary components. In particular, the boundary
is a regular level set of the Bott integral, and the Reeb flow
is tangent to the boundary. By the sewing lemma \cite[Lemma~3.4]{ghs24},
the gluing along such boundaries produces again a Bott-integrable
contact structure.

\begin{defn}
A Bott-integrable contact structure $\xi$ on a compact $3$-manifold~$M$
is called \textbf{$H_1$-complete} if, given any
homology class $u\in H_1(M)$, one can find a a Bott-integrable
contact form $\alpha$ defining $\xi$ with a Bott integral $f$
whose critical link $L_f$ contains a sublink of elliptic periodic
Reeb orbits representing~$u$.
\end{defn}

The methods developed in \cite{ghs24} allow us to prove the
following key lemma.

\begin{lem}
\label{lem:H1-ot}
If a closed $3$-manifold $M$ admits an $H_1$-complete Bott-integrable contact
structure, then every overtwisted contact structure on $M$
is Bott integrable.
\end{lem}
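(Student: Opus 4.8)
The plan is to combine three ingredients: (1) the $H_1$-complete Bott-integrable contact structure $\xi_0$ that $M$ is assumed to carry, (2) Eliashberg's classification of overtwisted contact structures by homotopy classes of plane fields, and (3) the Euler-class formula from Lemma~\ref{lem:Euler-formula} together with the flexibility afforded by the critical link. The key observation I would exploit is that homotopy classes of oriented $2$-plane fields on a closed oriented $3$-manifold are governed (via the associated Gauss map / spin structures) by two invariants: the Euler class $e(\xi)\in H^2(M)$, which by Poincar\'e duality corresponds to a class in $H_1(M)$, and a finer $\mathbb{Z}$- or $\mathbb{Z}/d$-valued invariant (the Hopf/Gompf invariant) defined once the Euler class is fixed. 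Since by Eliashberg every homotopy class of plane field contains a \emph{unique} overtwisted contact structure up to isotopy, it suffices to realise every homotopy class of plane field by a Bott-integrable overtwisted contact structure.

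\medskip

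First I would use $H_1$-completeness to adjust the Euler class. Given a target overtwisted contact structure, let $u\in H_1(M)$ be the Poincar\'e dual of its Euler class. By hypothesis there is a Bott-integrable contact form $\alpha_0$ defining $\xi_0$ whose critical link $L_f$ contains a sublink $E_u$ of \emph{elliptic} orbits representing~$u$. By Lemma~\ref{lem:Euler-formula}, the Euler class of a Bott-integrable contact structure is $\sum_{\gamma\in E}\PD([\gamma])-\sum_{\gamma\in H}\PD([\gamma])$; so I would modify $\alpha_0$ near its critical orbits---using the normal-form and orbit-insertion techniques of Section~\ref{section:crit}, in particular the creation of cancelling elliptic/hyperbolic pairs---so that the signed count of elliptic minus hyperbolic orbits represents precisely~$u$. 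This produces a Bott-integrable contact structure $\xi_1$ on $M$ with $e(\xi_1)=\PD(u)$ equal to the Euler class of the target.

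\medskip

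Second I would perform a Lutz-type modification to make $\xi_1$ overtwisted and to correct the remaining homotopy-theoretic invariant. The standard tool is a \emph{Lutz twist} along a transverse knot (or along a pre-Lagrangian torus), which is readily carried out inside a Lutz-type neighbourhood $\alpha=h_1(r)\,\rmd x_1+h_2(r)\,\rmd x_2$ of a regular torus level set: one replaces the curve $r\mapsto(h_1(r),h_2(r))$ by one that winds an extra half- or full turn, which keeps the contact form of Lutz type, hence Bott-integrable, while rendering the contact structure overtwisted and shifting its homotopy class within the fixed Euler class. Since the homotopy classes of plane fields with a given Euler class form a torsor over a cyclic group detected by such $d$-fold Lutz twists, I can arrange the full $3$-dimensional invariant to match that of the target. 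Finally, by Eliashberg's uniqueness the resulting Bott-integrable overtwisted contact structure is isotopic to the given one, completing the proof.

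\medskip

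\textbf{Main obstacle.} The hard part will be the second step: keeping the contact form \emph{Bott integrable} while adjusting the full homotopy class of the plane field. Lutz twists are local and easy to keep in Lutz form, but I must verify that they realise \emph{all} values of the secondary ($d_3$/Hopf) invariant---not merely change it by one unit---and that, in combination with the Euler-class adjustment of the first step, every homotopy class of plane field on $M$ is hit. This requires a careful bookkeeping of how a Lutz twist along a Reeb-invariant torus affects the Gompf invariant, and an argument (likely drawing on the building-block structure recalled in the proof of Lemma~\ref{lem:Lf-graph}) that the available twisting tori suffice to generate the whole torsor.
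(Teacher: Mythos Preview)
Your proposal has the right broad shape---reduce to Eliashberg, adjust the $2$-dimensional obstruction using $H_1$-completeness, then fix the $3$-dimensional invariant---but the first step contains a genuine gap, and the mechanism you propose there does not work.

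The invariant governing homotopy of plane fields over the $2$-skeleton is \emph{not} the Euler class; it is the primary obstruction $d^2(\xi_0,\eta)\in H^2(M)$, related to the Euler class by $2d^2(\xi_0,\eta)=e(\xi_0)-e(\eta)$. In the presence of $2$-torsion in $H^2(M)$ (which is allowed here), matching Euler classes does not force $d^2=0$. So your plan to ``produce a Bott-integrable $\xi_1$ with $e(\xi_1)=e(\eta)$'' is insufficient. More seriously, the tool you invoke---orbit-insertion and creation of cancelling elliptic/hyperbolic pairs---does not change the contact structure up to isotopy at all, and hence cannot change $e$ or $d^2$. These modifications alter only the Bott integral (and sometimes the contact \emph{form} within a fixed isotopy class of contact structures); the signed count in Lemma~\ref{lem:Euler-formula} is preserved precisely because the new pairs cancel. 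You cannot ``arrange the signed count to equal $u$'' without changing~$\xi_0$ itself.

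The paper's proof uses the $H_1$-completeness differently: it realises the class $u:=\PD\bigl(d^2(\xi_0,\eta)\bigr)$ (not $\PD(e(\eta))$) by a sublink $L_u$ of elliptic critical orbits, and then performs \emph{simple Lutz twists} along the components of~$L_u$. The key formula is $d^2(\xi^K,\xi)=-\PD[K]$ for a Lutz twist along a positively transverse knot~$K$; this is what kills $d^2$ directly, avoiding any issue with $2$-torsion. For the $3$-dimensional obstruction---which you correctly flag as the hard part of your approach---the paper sidesteps any delicate bookkeeping of Gompf invariants: one simply takes a Bott-integrable contact connected sum with the appropriate overtwisted contact structure on~$S^3$, all of which are Bott integrable by \cite[Theorem~1.9]{ghs24}. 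This is both cleaner and avoids the torsor analysis you were anticipating.
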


\begin{proof}
By Eliashberg's classification \cite{elia89} of overtwisted contact structures
we need only show that every homotopy class of tangent $2$-plane fields
on $M$ contains a Bott-integrable overtwisted contact structure. Write
$\xi_0$ for the given $H_1$-complete Bott-integrable contact structure
on~$M$. We may assume that $\xi_0$ is overtwisted; if it is not, we
create an elliptic critical orbit (together with
a hyperbolic one) as in \cite[Section~2.3]{ghs24}, and then perform
a full Lutz twist, which does not change the homotopy class
of $\xi_0$ as a tangent $2$-plane field.

Now, given any tangent $2$-plane field~$\eta$, there is
an obstruction $d^2(\xi_0,\eta)\in H^2(M)$ for $\xi_0$ to
be homotopic to $\eta$ over the $2$-skeleton of~$M$. This obstruction
is antisymmetric and additive; see \cite[Section~4.2]{geig08}:
\[ d^2(\eta_1,\eta_2)=-d^2(\eta_2,\eta_1)
\;\;\;\text{and}\;\;\;
d^2(\eta_1,\eta_2)+d^2(\eta_2,\eta_3)=d^2(\eta_1,\eta_3).\]
The relation of the obstruction class $d^2$ with the Euler class
is given by
\[ 2d^2(\eta_1,\eta_2)=e(\eta_1)-e(\eta_2);\]
see \cite[Remark~4.3.4]{geig08}.
Thus, in the presence of $2$-torsion in $H^2(M)$ the obstruction
class is not detected by the Euler class.

If a contact structure $\xi$ is modified by a Lutz twist
along a positively transverse knot $K\subset (M,\xi)$ into a new
contact structure~$\xi^K$, then $d^2(\xi^K,\xi)=-\PD[K]$.

The $H_1$-completeness of $\xi_0$ allows us to choose a Bott-integrable
contact form $\alpha_0^u$ with a Bott integral $f_u$ whose
critical link $L_{f_u}$ contains a sublink $L_u$ of elliptic Reeb
orbits representing the class $u:=\PD(d^2(\xi_0,\eta))\in H_1(M)$.
By performing Lutz twists along the components of~$L_u$, we obtain
a Bott-integrable contact form $\alpha_1^u$ (with the same
Bott integral~$f_u$) defining a contact structure
$\xi^u=\ker\alpha_1^u$ with $d^2(\xi^u,\xi_0)=-\PD(u)$, and hence
$d^2(\xi^u,\eta)=0$.

Finally, in order to modify $\xi^u$ into a Bott-integrable
contact structure homotopic to~$\eta$ over all of~$M$,
it suffices to take a contact connected sum as in \cite[Section~6]{ghs24}
with a suitable overtwisted contact structure on~$S^3$, all of which
are Bott integrable by \cite[Theorem~1.9]{ghs24}.
\end{proof}

Here is the central result of this section.

\begin{prop}
\label{prop:Seifert-H1}
Every compact Seifert manifold (possibly with toral boundaries)
admits an $H_1$-complete Bott-integrable contact structure.
\end{prop}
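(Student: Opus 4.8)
The plan is to construct, for any compact Seifert manifold $M$, a Bott-integrable contact form whose critical link is rich enough to realise every class in $H_1(M)$ by a sublink of elliptic orbits. The natural starting point is the Seifert structure itself: a Seifert fibration is a circle bundle over a $2$-orbifold base $B$, with singular fibres sitting over the cone points (and over the fixed points of an involution when the base is non-orientable). The $S^1$-action furnishes an invariant contact form with a Bott integral pulled back from a Morse--Bott function on~$B$, as in \cite[Theorem~1.8]{ghs24}. The critical Reeb orbits then lie over the critical points of the base function, and one controls their homology classes through the fibration. First I would treat the case of an \emph{orientable} base~$B$, choosing a Morse--Bott function $g\co B\to\R$ whose critical manifolds are arranged so that the corresponding fibres (elliptic critical orbits over minima/maxima of~$g$) carry all the homology one needs.

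The key homological input is that $H_1(M)$ of a Seifert manifold is generated by the class of a regular fibre together with the classes of sections (or arcs) over the base, subject to the Seifert relations coming from the singular fibres and the Euler number. Concretely, the plan is: (1) realise the regular-fibre class by a single elliptic critical orbit, which is any fibre over a nondegenerate minimum of~$g$; (2) realise classes that project nontrivially to $H_1(B)$ by choosing $g$ to have critical \emph{circles} (rather than points) lying over embedded curves representing a generating set of $H_1(B)$, so that the critical orbits are elliptic tori lying over those curves, which I then resolve via the perturbation of Subsection~\ref{subsection:remove} into elliptic-plus-hyperbolic orbit pairs, keeping the elliptic piece in the desired class. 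By taking $g$ with independently placed critical loci, and by observing that the elliptic critical orbits over distinct generating curves can be assembled into a sublink whose total class is any prescribed $\Z$-combination, one obtains $H_1$-completeness. Throughout, the gluing across torus boundaries is handled by the sewing lemma \cite[Lemma~3.4]{ghs24}, so the construction also covers manifolds with toral boundary by restricting $g$ to have regular level tori there.

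The \emph{non-orientable base} case is where I expect the main obstacle. When $B$ is non-orientable, $M$ is built from the Seifert building blocks that include the unit-interval-bundle neighbourhood $\nu K$ of a critical Klein bottle---exactly the model of Theorem~\ref{thm:critical-K}. Here a Morse--Bott function on the non-orientable base produces critical \emph{Klein bottles} in $\Crit(f)$ rather than tori, and these again must be removed by the perturbation of Subsection~\ref{subsection:remove} to yield an elliptic orbit of the correct class. The delicate point is bookkeeping the homology: passing to the orientable double cover, tracking how the $\Z_2$-action identifies orbit classes, and checking that after resolving each critical Klein bottle the surviving elliptic orbit still represents the intended element of $H_1(M)$ (and not merely of the cover). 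I would verify this by using the explicit model $[r,s,\theta]\sim[-r,s-1,-\theta]$: the perturbed Bott integral $r^2+\chi(r)\cos\theta$ has its elliptic orbit at $(r,\theta)=(0,\pi)$, and I would compute its class directly in the $\nu K$ coordinates rather than downstairs.

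Finally I would assemble the global statement. Having realised a generating set of $H_1(M)$ by elliptic orbits (or resolved elliptic orbits), the additivity of homology classes under disjoint union of sublinks lets me represent an arbitrary $u\in H_1(M)$ by a suitable sublink. One subtlety to address is that the same Bott integral~$f_u$ must simultaneously contain \emph{all} the orbits needed to express the chosen generators with their correct multiplicities; this is arranged by choosing the base function~$g$ from the outset to have enough critical loci over a full generating set of~$H_1(B)$, together with sufficiently many fibre-class orbits, so that no class requires altering~$f_u$ mid-construction. With the perturbation and sewing lemmas in hand, the remaining work is the explicit homological verification on each Seifert building block, which I regard as routine once the non-orientable bookkeeping is settled.
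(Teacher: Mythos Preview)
Your plan has a genuine gap at step~(2). Resolving a critical torus via Section~\ref{subsection:remove} does \emph{not} let you choose the homology class of the resulting elliptic orbit: the perturbation produces a pair of critical Reeb orbits on the original critical torus, and their class on that torus is the direction of the Reeb flow there (this is explicit in the normalisation of Section~\ref{subsection:remove}, where after the $\SL(2,\Z)$-change one has $R=\partial_{x_1}$ and the elliptic orbit sits at $\{r=0,\,x_2=\pi\}$). For an $S^1$-invariant contact form with Bott integral pulled back from the base, the Reeb direction on the torus $\pi^{-1}(\gamma)$ is determined by the contact form and has no reason to be the horizontal $\gamma$-direction; in the most natural case (Boothby--Wang type, $\alpha(\partial_\theta)=1$) the Reeb vector field \emph{is} the fibre direction, so every resolved elliptic orbit lands in the fibre class and you never reach the classes coming from $H_1(B)$. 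Conversely, if you pick an invariant form whose Reeb flow is not along the fibres, then your step~(1)---a fibre over a nondegenerate minimum being an elliptic critical Reeb orbit---breaks, since that fibre is no longer a Reeb orbit.

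This is exactly why the paper does not work with the $S^1$-invariant form. It starts instead from an open book on $\Sigma\times S^1$ (Reeb transverse to the pages, hence in the $\theta$-direction) and then inserts a \emph{full Lutz twist} along a thickened torus over~$\gamma$. The point of the Lutz twist is dynamical, not just homotopical: inside the twisted region there are sub-thickened-tori where the Reeb vector field is a constant multiple of $\partial_\varphi$ (the $\gamma$-direction) or $\pm\partial_\theta$, and only then does a reverse Giroux elimination create elliptic critical orbits in the class of~$\gamma$, of the fibre, or of its negative. Without a mechanism that actually rotates the Reeb direction on the torus---the Lutz twist in the paper, or something equivalent---your plan cannot produce critical orbits in the base-curve classes, and $H_1$-completeness fails.
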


The following corollary is then immediate from Lemma~\ref{lem:H1-ot}
and Proposition~\ref{prop:Seifert-H1}.

\begin{cor}
\label{cor:Seifert-ot}
Every overtwisted contact structure on any closed
Seifert manifold is Bott integrable.\qed
\end{cor}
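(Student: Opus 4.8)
The plan is to obtain this statement as an immediate consequence of the two preceding results, so the actual work has already been carried out in their proofs. First I would observe that a closed Seifert manifold $M$ is in particular a compact Seifert manifold, with empty (hence trivially toral) boundary, so Proposition~\ref{prop:Seifert-H1} applies and furnishes an $H_1$-complete Bott-integrable contact structure $\xi_0$ on~$M$. I would then feed this $\xi_0$ into Lemma~\ref{lem:H1-ot}, whose hypothesis is precisely the existence of such a structure on a closed $3$-manifold, and whose conclusion is that every overtwisted contact structure on that manifold is Bott integrable. The two statements dovetail exactly, with no gap in between, which is why the excerpt records the corollary as immediate and supplies only the \qed.

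Since the deduction itself is a one-line implication, I expect no obstacle at the level of the corollary proper; the genuine difficulties lie entirely upstream in the two inputs. In Proposition~\ref{prop:Seifert-H1} one must realise every class in $H_1(M)$ by a sublink of elliptic critical Reeb orbits of a suitable Bott integral, which—over a non-orientable base—forces one through the perturbation result for critical surfaces established in Section~\ref{section:crit} as well as the fibre connected sum construction. In Lemma~\ref{lem:H1-ot} the content is homotopy-theoretic bookkeeping: one invokes Eliashberg's classification~\cite{elia89} to reduce to hitting every homotopy class of tangent $2$-plane fields, corrects the $d^2$-obstruction over the $2$-skeleton by performing Lutz twists along the elliptic sublink supplied by $H_1$-completeness, and finally adjusts over the top cell via a contact connected sum with an overtwisted $S^3$. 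Consequently, the only honest description of a proof of the corollary is that I would verify that the hypotheses of these two results hold for a closed Seifert manifold—which is automatic—and then quote them in sequence.
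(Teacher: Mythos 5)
Your proposal is correct and matches the paper exactly: the corollary is stated with a \verb|\qed| precisely because it follows immediately by feeding the $H_1$-complete Bott-integrable contact structure from Proposition~\ref{prop:Seifert-H1} (applicable since a closed Seifert manifold has empty, hence vacuously toral, boundary) into Lemma~\ref{lem:H1-ot}. Your summary of where the real work lies upstream is also accurate, so there is nothing to add.
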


We now turn to the proof of Proposition~\ref{prop:Seifert-H1}.
Most of the work goes into proving this result for
a product bundle $\Sigma\times S^1$, where $\Sigma$ is a compact orientable
surface. The case of general Seifert bundles over orientable surfaces
then follows without much effort. The case of a Seifert bundle
over a non-orientable base (with orientable total space) requires
an extra argument involving a construction of fibre connected sums
for Bott-integrable contact structures.
\subsection{Orientable base}
We start by looking at the case of product bundles $\Sigma\times S^1$
with an orientable base surface~$\Sigma$, which is allowed to
have boundary. We recall from \cite{ghs24} how
to construct a Bott-integrable contact form on $\Sigma\times S^1$
from an open book decomposition. We then modify this contact form
by introducing homotopically inessential Lutz components, and turn some
periodic Reeb orbits inside these Lutz components into critical ones
by changing the Bott integral.
\subsubsection{Open books}
As discussed in \cite[Section~7]{ghs24}, the manifold
$\Sigma\times S^1$ may be thought of as an open book with page
$\Sigma_0$ equal to $\Sigma$ with two open discs removed, and monodromy
a couple of Dehn twists (one right-handed, one left-handed)
parallel to the two boundary components $\partial\Sigma_0\setminus
\partial\Sigma$ created by the removal of these discs.
For the construction of an $H_1$-complete Bott-integrable contact structure
on $\Sigma\times S^1$, we need the following mild extension
of \cite[Lemma~3.3]{ghs24}. We formulate it for a single oriented,
embedded circle $\gamma\subset\Sigma_0$, but it works equally well for a
finite collection of pairwise disjoint embedded circles.

We choose $1$-forms $\lambda_i=\rho_i(r_i)\,\rmd\varphi_i$, with $\rho_i'>0$,
on collar neighbourhoods of the finitely many boundary components
$\partial_i\Sigma_0$, using collar coordinates $(r_i,\varphi_i)
\in(-1,0]\times S^1$. Furthermore---this is where we extend the
statement from our previous paper---we consider the $1$-form
$\lambda_{\gamma}=s\,\rmd\varphi$ in terms of coordinates
$(s,\varphi)\in(-1,1)\times S^1$ on a neighbourhood of~$\gamma\equiv
\{0\}\times S^1$. It will be understood that this neighbourhood of $\gamma$
is disjoint from the support of the Dehn twists.

\begin{lem}
If $\sum_i\rho_i(0)>0$, there is an exact area
form $\omega=\rmd\lambda$ on $\Sigma_0$ with $\lambda=\lambda_i$ near
$\partial_i\Sigma$ and $\lambda=\lambda_{\gamma}$ near~$\gamma$.
\end{lem}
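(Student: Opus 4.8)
The plan is to reduce the statement to the extension result \cite[Lemma~3.3]{ghs24}, which treats the case where only the boundary collar forms $\lambda_i$ are prescribed, by cutting $\Sigma_0$ open along the interior circle~$\gamma$. The starting observation is that near $\gamma$ the prescribed form already satisfies $\rmd\lambda_{\gamma}=\rmd s\wedge\rmd\varphi>0$, so $\gamma$ lies in a region where $\lambda$ restricts to a positive exact area form. Cutting along $\gamma$ therefore produces a compact surface $\widehat{\Sigma}_0$ (connected when $\gamma$ is non-separating, otherwise with two pieces) carrying two new boundary circles $\gamma_+$ and $\gamma_-$, each inheriting a collar form from $\lambda_{\gamma}$.

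The first key step is to rewrite $\lambda_{\gamma}=s\,\rmd\varphi$ as a standard collar form $\rho_{\pm}(r_{\pm})\,\rmd\varphi_{\pm}$ with $\rho_{\pm}'>0$ on each new boundary. On the side $\{s<0\}$ one sets $r_-=s$ and $\varphi_-=\varphi$, so $\lambda_{\gamma}=r_-\,\rmd\varphi_-$ with $\rho_-(r_-)=r_-$, and $\rmd\lambda_{\gamma}=\rmd r_-\wedge\rmd\varphi_-$ still induces the inherited orientation. On the side $\{s>0\}$ the inward direction is that of increasing~$s$, so to obtain a collar coordinate with interior $\{r_+<0\}$ and positive monotonicity one must \emph{also} reverse the angular coordinate: setting $r_+=-s$ and $\varphi_+=-\varphi$ gives $\lambda_{\gamma}=r_+\,\rmd\varphi_+$ with $\rho_+(r_+)=r_+$, again orientation-compatibly since $\rmd s\wedge\rmd\varphi=\rmd r_+\wedge\rmd\varphi_+$. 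In both cases $\rho_{\pm}(0)=0$, so the two new boundary components contribute nothing to the boundary data measured by the values $\rho(0)$.

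The main point, and the only place where the hypothesis enters, is the area (Stokes) constraint governing the existence of an exact positive area form with prescribed boundary collars: the total area of each component of $\widehat{\Sigma}_0$ equals the sum of the boundary integrals $\int_{\partial_i}\lambda_i$, which are positive multiples of the $\rho_i(0)$, plus the contributions of $\gamma_{\pm}$, which vanish because $\rho_{\pm}(0)=0$. Hence the requisite positivity of the total boundary integral is exactly the assumption $\sum_i\rho_i(0)>0$. When $\gamma$ is non-separating, $\widehat{\Sigma}_0$ is connected and this is immediate; the delicate case, which I expect to be the main obstacle, is when $\gamma$ separates $\Sigma_0$, for then \cite[Lemma~3.3]{ghs24} must be applied to each piece and one has to check that the boundary integral is positive on each of the two components and not merely in total. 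This is where the precise placement of the boundary components relative to $\gamma$ in the construction is used.

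With the positivity verified on each relevant component, \cite[Lemma~3.3]{ghs24} supplies a $1$-form on $\widehat{\Sigma}_0$ whose differential is a positive area form and which agrees with $\lambda_i$ near each $\partial_i\Sigma$ and with $\lambda_{\gamma}$ near $\gamma_{\pm}$. Since both new collars carry the identical germ $s\,\rmd\varphi$, the resulting forms glue across $\gamma$ to a smooth $1$-form $\lambda$ on $\Sigma_0$ with $\rmd\lambda>0$, equal to $\lambda_i$ near $\partial_i\Sigma$ and to $\lambda_{\gamma}$ near~$\gamma$, as required. The extension to a finite disjoint collection of circles is identical, cutting along all of them simultaneously.
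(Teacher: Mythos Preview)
Your approach is essentially the paper's: both cut $\Sigma_0$ along $\gamma$ and invoke the argument of \cite[Lemma~3.3]{ghs24} on the cut surface, exploiting that the new boundary circles $\gamma_\pm$ contribute zero to the sum $\sum\rho(0)$. The paper merely orders the steps differently---it first selects an area form $\omega$ on all of $\Sigma_0$ with the prescribed local behaviour and total integral $\sum_i\int_{\partial_i\Sigma_0}\lambda_i$, and only then cuts along $\gamma$ and runs the compactly supported cohomology step to produce the primitive---whereas you cut first, apply the lemma wholesale on $\widehat\Sigma_0$, and then glue; these are the same argument repackaged. Your caution about the separating case is legitimate and the paper does not address it explicitly either; in the intended application $\gamma$ represents a primitive class in the span of the $a_i,b_i$ and is therefore non-separating, so $\widehat\Sigma_0$ is connected and no componentwise check is needed.
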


\begin{proof}
Thanks to the assumption in the lemma, for $\varepsilon>0$
sufficiently small we still have $\sum_i\rho_i(-\varepsilon)>0$.
Hence
\[ \sum_i\int_{\partial_i\Sigma_0}\lambda_i=2\pi\sum_i\rho_i(0)>
2\pi\sum_i\bigl(\rho_i(0)-\rho_i(-\varepsilon)\bigr)=
\sum_i\int_{(-\varepsilon,0]\times\partial_i\Sigma_0}\rmd\lambda_i.\]
This allows us to choose an area form
$\omega$ on $\Sigma_0$ that coincides with $\rmd\lambda_i$ near
$\partial_i\Sigma_0$ and with $\rmd\lambda_{\gamma}$ near~$\gamma$, and which
satisfies
\[ \int_{\Sigma_0}\omega=\sum_i\int_{\partial_i\Sigma_0}\lambda_i.\]
Now apply the argument from \cite[Lemma~3.3]{ghs24}, involving
compactly supported cohomology, to the surface obtained by cutting open
$\Sigma_0$ along~$\gamma$ in order to find a primitive $\lambda$
of $\omega$ with the desired properties.
\end{proof}

Write $\psi\co\Sigma_0\rightarrow\Sigma_0$ for the diffeomorphism
given by the two boundary-parallel Dehn twists.
As shown in \cite[Section~7.2]{ghs24}, an area form $\omega=\rmd\lambda$
as just constructed gives rise to a Bott-integrable contact form
on $\Sigma\times S^1$, where the Bott integral on the mapping torus
$\Sigma_0(\psi)$ can be taken to be one induced from any Morse
function on $\Sigma_0$ that only depends on the radial coordinate
in the collar neighbourhoods of the boundary (and hence is invariant
under~$\psi$). On the complement $\Sigma_0'\subset\Sigma_0$ of two collars
containing the annuli where the Dehn twists are performed,
the mapping torus looks like a product $\Sigma_0'\times S^1$ with
contact form $\lambda+C\,\rmd\theta$, where $C$ is some
large positive constant.

The boundary of this mapping torus is
$\partial\Sigma_0(\psi)=\partial\Sigma_0\times S^1$,
and the manifold $\Sigma\times S^1$ is obtained by gluing
two solid tori $S^1\times D^2$ to this mapping torus along
the boundary components $(\partial\Sigma_0\setminus\partial\Sigma)\times S^1$.
Then extend the Morse--Bott function on $\Sigma_0(\psi)$ over these two
solid tori as a function in the radial coordinate on the $D^2$-factor,
with an isolated non-degenerate maximum at the centre. This Morse--Bott
function is an integral for the Giroux contact form on the open book
decomposition of $\Sigma\times S^1$.

The next lemma says that the Morse function on $\Sigma_0$ can be chosen
to be adapted to any embedded circle $\gamma\subset\Sigma_0$
(or a collection of such circles).

\begin{lem}
Given $\gamma\subset\Sigma_0$, we can find a Morse function on $\Sigma_0$
which is a strictly monotone function of the radial coordinate in the
collar neighbourhoods of the boundary, and a strictly monotone function of
$s$ in the coordinates $(s,\varphi)\in (-1,1)\times S^1$ around~$\gamma$.
\end{lem}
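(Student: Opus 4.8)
The plan is to prescribe $g$ on the regions where its behaviour is fixed by the statement, and then to extend it over the remainder of $\Sigma_0$ by a relative Morse approximation. Since $\gamma$ is an embedded circle in the interior of $\Sigma_0$, the closed sets $\gamma$ and $\partial\Sigma_0$ are disjoint and thus admit disjoint open neighbourhoods. First I would fix an annular neighbourhood $A\cong(-1,1)\times S^1$ of $\gamma\equiv\{0\}\times S^1$ and pairwise disjoint collar neighbourhoods $C_i\cong(-1,0]\times S^1$ of the boundary components $\partial_i\Sigma_0$, all disjoint from~$A$. On $A$ I set $g=s$, and on each $C_i$ I set $g=\mu_i(r_i)$ for some strictly monotone function~$\mu_i$. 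On $A$ and the $C_i$ the differential $\rmd g$ is nowhere zero, so $g$ is a submersion there; in particular it has no critical points, and it has exactly the required monotone form.

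Because the prescribed data live on an open set $U:=A\cup\bigcup_i C_i$ on which $g$ is already smooth, a cutoff argument (or a partition of unity) extends it to a smooth function $g_0\co\Sigma_0\rightarrow\R$ agreeing with the prescribed germs on~$U$. This $g_0$ is a submersion on~$U$, but it may well have degenerate or non-isolated critical points in the compact complementary region $\Sigma_0\setminus U$.

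The substance of the proof is the final Morse-ification. Choose an open set $U'$ with $\overline{U'}\subset U$ still containing $\gamma$ and the boundary collars, and apply the standard relative approximation of smooth functions by Morse functions: since $g_0$ is a submersion on the open neighbourhood $U\supset\overline{U'}$, one can find a Morse function $g$ on $\Sigma_0$ that is $C^{\infty}$-close to $g_0$ and agrees with $g_0$ on~$U'$. The perturbation is supported in the interior region $\Sigma_0\setminus\overline{U'}$ and leaves the prescribed germs untouched, so after shrinking the neighbourhoods of $\gamma$ and of $\partial\Sigma_0$ the function $g$ still has the form $g=s$ near $\gamma$ and $g=\mu_i(r_i)$ near~$\partial_i\Sigma_0$. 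Being radial in the boundary collars, $g$ is in particular invariant under the boundary-parallel Dehn twists~$\psi$. The argument applies verbatim to a finite collection of pairwise disjoint embedded circles, whose annular neighbourhoods may be taken disjoint from one another and from the collars.

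I expect the relative Morse-ification to be the only real obstacle, and it is a mild one. One must check that the interior critical points of $g_0$ can be removed without disturbing the prescribed regions---which holds because those regions are open sets on which $g_0$ is a genuine submersion, so perturbations supported away from them preserve the submersive (hence Morse) behaviour there---and that no global index constraint obstructs the perturbation, which is automatic since we impose nothing on the interior critical points beyond non-degeneracy. For a surface this relative approximation is elementary; alternatively, one may cut $\Sigma_0$ open along $\gamma$ and directly build a Morse function on the resulting compact surface that is monotone in the collar coordinate at every boundary circle, matching the germ $g=s$ across the two copies of $\gamma$ so that the reglued function is smooth and strictly monotone in~$s$.
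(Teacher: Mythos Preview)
Your argument is correct, but it takes a genuinely different route from the paper. The paper's proof is purely geometric: it cuts $\Sigma_0$ open along $\gamma$ to obtain a surface $\Sigma_0^{\gamma}$ with two new boundary circles $\pm\gamma$, then embeds $\Sigma_0^{\gamma}$ in $\R^3$ so that every boundary component lies in a horizontal plane, with $\pm\gamma$ at the \emph{same} height (one as a local minimum, one as a local maximum of the height along the boundary). The $z$-coordinate then serves directly as the Morse function, and after regluing along $\pm\gamma$ the function is smooth and strictly monotone in~$s$. This is exactly the alternative you sketch in your last sentence.

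Your main argument instead prescribes the function on the collar and annular regions and invokes the relative density of Morse functions to fix the interior. This is perfectly valid---the relative Morse approximation theorem is standard, and your care in noting that $g_0$ is a genuine submersion on an open neighbourhood of the prescribed regions is exactly what is needed. The trade-off: the paper's embedding argument is self-contained and gives an explicit picture, while yours is shorter to state, generalises immediately beyond surfaces, and avoids any discussion of matching jets across the glued copies of~$\gamma$. Either approach is adequate here.
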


\begin{proof}
Let $\Sigma_0^{\gamma}$ be the surface obtained by cutting open $\Sigma_0$
along $\gamma$ and compactifying it by including the new boundary
circles~$\pm\gamma$. The desired Morse function is given by the
height function (say, the $z$-coordinate) on a copy of $\Sigma_0^{\gamma}$
embedded in $\R^3$ in such a way that each boundary component of
$\Sigma_0^{\gamma}$ lies in a plane $\{z=\text{const.}\}$,
with the boundary components $\pm\gamma$ placed at the same height,
one as a lower and one as an upper boundary. All other boundary components
may be chosen to be local maxima of the height function.
\end{proof}

\begin{Note}
(1) The isolated minima of the Morse function on $\Sigma_0$ give rise
to elliptic critical Reeb orbits. By performing a full Lutz twist
along such an orbit, we obtain an overtwisted Bott-integrable
contact structure in the same homotopy class of tangent $2$-plane fields.

(2) We may assume that the $1$-form $\lambda$ we construct for any given
simple closed loop $\gamma\subset\Sigma_0$ does not depend on $\gamma$
near~$\partial\Sigma_0$. Then the convex linear interpolation between
the $1$-forms $\lambda,\lambda'$ corresponding to two curves
$\gamma,\gamma'\subset\Sigma_0$ induces a homotopy of contact forms relative
to a neighbourhood of~$\partial\Sigma_0$. This means that the
isotopy class of the contact structure we constructed on $\Sigma\times S^1$
does not depend on~$\gamma$.
\end{Note}
\subsubsection{Introducing Lutz components and critical orbits}
\label{subsubsection:Lutz+critical}
Let $\gamma\subset\Sigma_0$, its neighbourhood $(-1,1)_s\times S^1_{\varphi}$,
and the adapted Morse function on $\Sigma_0$ be as before.
On the thickened torus $(-\varepsilon,\varepsilon)_s\times
S^1_{\varphi}\times S^1_{\theta}$, for $\varepsilon>0$ sufficiently small,
the contact form $\lambda+C\,\rmd\theta$ equals $s\,\rmd\varphi+C\,\rmd\theta$.
We now perform a full Lutz twist, that is, we
replace this contact form by $h_1(s)\rmd\varphi+h_2(s)\rmd\theta$,
with $(-\varepsilon,\varepsilon)\ni s\mapsto (h_1(s),h_2(s))$ describing the
planar curve as shown in Figure~\ref{figure:full-Lutz}, subject to the
boundary conditions $h_1(s)=s$ and $h_2(s)=C$ near $s=\pm\varepsilon$.
In this way we construct three smaller thickened tori where, up to irrelevant
positive constants, the contact form is given by $h_1(s)\,\rmd\varphi\pm
\rmd\theta$ or $\rmd\varphi+h_2(s)\rmd\theta$, whose Reeb vector
field is a constant positive multiple of $\pm\partial_{\theta}$
or $\partial_{\varphi}$, respectively. 

\begin{figure}[h]
\labellist
\small\hair 2pt
\pinlabel $h_1$ [t] at 427 217
\pinlabel $h_2$ [r] at 216 430
\pinlabel $C$ [tr] at 215 395
\pinlabel $\varepsilon$ [t] at 324 206
\pinlabel $-\varepsilon$ [tr] at 114 206
\endlabellist
\centering
\includegraphics[scale=0.4]{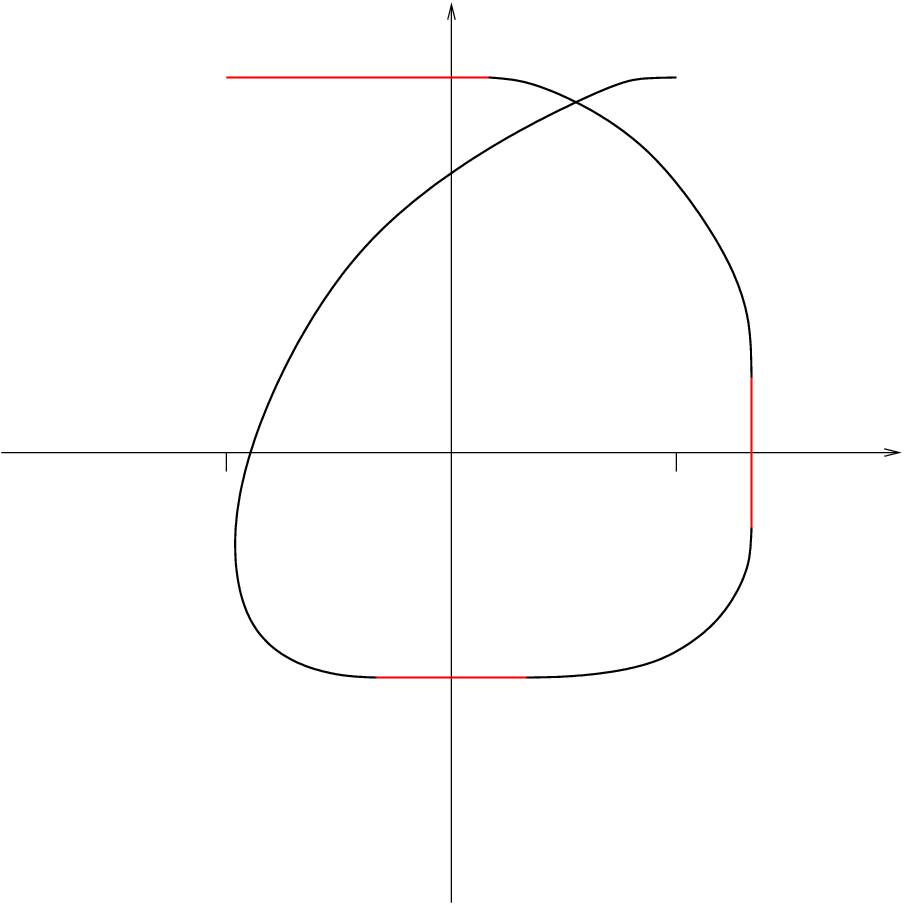}
  \caption{Introducing a Lutz component}
  \label{figure:full-Lutz}
\end{figure}

This modified contact form still has the same Bott integral
(a monotone function of $s$), and the homotopy class of the
contact structure as a tangent $2$-plane field
is unchanged by the full Lutz twist; see
\cite[Lemma~4.5.3]{geig08}.

In a second step, we modify the Bott integral by a finite number
of `reverse Giroux eliminations' in the $(s,\varphi)$- or the
$(s,\theta)$-plane transverse to the Reeb flow in the direction
of $\pm\partial_{\theta}$ or $\partial_{\varphi}$, respectively;
see \cite[Section~2.3]{ghs24} for the details. In this way,
we can introduce any finite number of pairs of elliptic and
hyperbolic critical orbits into the Reeb flow, lying in the
isotopy class of $\pm S^1_{\theta}$ (the fibre class) or $S^1_{\varphi}$
(the class represented by~$\gamma$).
\subsubsection{Proof of Proposition~\ref{prop:Seifert-H1} for orientable base}
Any Seifert manifold $M$ with orientable base is obtained from a
product manifold $\Sigma\times S^1$, where $\Sigma$ may always be assumed
to have at least one boundary component, by gluing solid tori to some
of the boundary components. The contact form we constructed
on $\Sigma\times S^1$ is of Lutz type near the boundary, and so it extends
over the solid tori as a Bott-integrable contact form with an
elliptic critical Reeb orbit at each spine.

Write $g$ for the genus of~$\Sigma$, and $n+1\geq 1$ for the number of
boundary components, so that $\Sigma$ is obtained from the closed,
orientable surface $\Sigma_g$ of genus $g$ by removing $n+1$ discs.
The homology group $H_1(\Sigma\times S^1)\cong\Z^{2g+n+1}$ is
freely generated by the standard generators $a_1,b_1,\ldots,a_g,b_g$ of
$H_1(\Sigma_g)$, boundary parallel loops $q_1,\ldots,q_n$ to $n$
of the $n+1$ boundary components, and the
fibre class $h=[*\times S^1]$. The gluing of solid tori
adds relations to give us $H_1(M)$, but no new generators.
It therefore suffices to show that the contact structure on
$\Sigma\times S^1$ is $H_1$-complete.

Any class $u\in H_1(\Sigma\times S^1)$ can be written as
\[ u=kc+\sum_{i=1}^n\ell_iq_i+mh\]
with $c$ an indivisible class in the subgroup
generated by $a_1,b_1,\ldots, a_g,b_g$, the coefficient $k$
a non-negative integer, and $\ell_1,\ldots,\ell_n,m\in\Z$.
By \cite{mepa78}, the class $c$ can be represented by a simple closed
loop $\gamma \subset\Sigma_0$. The sum $\sum_i\ell_iq_i$
can be represented by a disjoint union of boundary parallel curves
in the collars of $\partial\Sigma\subset\Sigma_0$, where these
curves are level sets of the Bott integral.
The class $mh$ is represented by $m$ parallel $S^1$-fibres of
$\Sigma_0\times S^1$. Thus, our arguments
above show that $u$ can be represented by a collection of some
elliptic critical Reeb orbits of a suitably modified Bott-integrable
contact form on $\Sigma\times S^1$.

\begin{rem}
In order to obtain a Seifert manifold, the solid tori
have to be attached to $\Sigma\times S^1$ by sending the
meridian of the solid torus to a simple closed curve
on the boundary torus winding non-trivially around the
corresponding boundary component of~$\Sigma$. If instead the meridian is
glued to a curve representing the $S^1$-fibre of $\Sigma\times S^1$,
one speaks of a \emph{generalised} Seifert manifold; see~\cite{jane83}.
Our argument applies equally to those.
\end{rem}
\subsection{Non-orientable base}
Any Seifert manifold over a non-orientable base can be obtained
from a Seifert manifold over an orientable base by a fibre connected
sum (along a regular fibre) with the unique non-trivial $S^1$-bundle
over~$\RP^2$. When we think of $\RP^2$ as being obtained by gluing
a M\"obius band and a $2$-disc, the restriction of the non-trivial
$S^1$-bundle to the M\"obius band gives us precisely the Klein
bottle neighbourhood $\nu K$ described earlier. The only new homological
contribution comes from the spine of the M\"obius band.
\subsubsection{Modification near a Klein bottle}
On $\nu K$ we take the standard Bott-integrable contact form
$\alpha=\rmd s-r\,\rmd\theta$ as in Theorem~\ref{thm:critical-K}.
Inside this neighbourhood we can introduce a Lutz component
as in Section~\ref{subsubsection:Lutz+critical}
near a regular torus $\{r=r_0\}$ for some small positive~$r_0$,
and then introduce an elliptic critical Reeb orbit in the class
of the $S^1$-fibre. Also, by a perturbation as
in Section~\ref{subsection:remove}, we create an elliptic critical Reeb orbit
inside the Klein bottle---thought of as the non-orientable $S^1$-bundle over
the spine of the M\"obius band---in the class of the spine.

The latter elliptic orbit gives us the $H_1$-completeness;
the former allows us to perform the fibre connected sum.
\subsubsection{Fibre connected sum}
The general construction of fibre connected sums of contact
manifolds along two diffeomorphic contact submanifolds with
opposite normal bundles has been described in \cite[Section~7.4]{geig08}.

In our situation, an \emph{ad hoc} construction is much simpler
and yields the Bott-integrability to boot. Indeed, the fibre connected
sum along two elliptic critical Reeb orbits can be defined
by removing standard neighbourhoods as in \cite[Theorem~2.4]{ghs24}
around the two orbits, and then identifying the toral boundaries
of the complements by a diffeomorphism that sends the fibre circle
on one side to the fibre circle on the other, and glues the meridional circles
with a flip of the orientation. Notice that the outer normal on one
complement glues with the inner normal on the other, so the gluing of the
torus boundary should indeed be done with an orientation-reversing
diffeomorphism.

Since the contact form near the boundary of a tubular neighbourhood
of an elliptic orbit is of Lutz type, the Bott-integrable contact
form on the fibre connected sum can now be constructed by an
application of the sewing lemma \cite[Lemma~3.4]{ghs24}.

This concludes the proof of Proposition~\ref{prop:Seifert-H1}.
\section{Proof of the main theorem}
\label{section:proof}
We now want to prove the `if' part of Theorem~\ref{thm:main}
for a fixed closed, orientable $3$-dimensional graph manifold~$M$.
Our task is to show that any overtwisted contact structure
$\xi$ on $M$ with $\PD(e(\xi))\in H_1(M)$ representable by a graph link
is Bott integrable. Similar to the proof of this statement
for Seifert manifolds in Corollary~\ref{cor:Seifert-ot}, we need to find
a Bott-integrable overtwisted contact structure $\xi_0$
on~$M$ which is $H_1$-complete within the range of homology classes
representable by graph links.

This strategy is based on a homological characterisation of graph links,
which has been established by Yano~\cite{yano85b}. We recall
some of the relevant $3$-manifold topology; for more details we recommend
Hatcher's notes~\cite{hatc23}.
\subsection{Graph manifolds and the JSJ decomposition}
Graph manifolds, as introduced by Waldhausen~\cite{wald67I,wald67II},
are the closed, orientable $3$-manifolds that can be decomposed
along a disjoint collection of embedded $2$-tori into
Seifert fibred pieces.

Given a graph manifold $M$, we consider a prime decomposition
\[ M=M_1\#\cdots\# M_n\]
of~$M$ as the connected sum of prime $3$-manifolds $M_1,\ldots,M_n$.
Recall that $M_i$ being prime means that it is not diffeomorphic to
the $3$-sphere~$S^3$, and that in any connected sum decomposition
$M_i=M_i'\# M_i''$, one of the summands $M_i',M_i''$ must be~$S^3$.
In this prime decomposition, the summands $M_1,\ldots,M_n$
are uniquely determined by~$M$, but the isotopy class
of the collection of $2$-spheres defining the splitting into prime
summands is not.

The summands $M_1,\ldots, M_n$ in the prime decomposition of
a graph manifold $M$ are also graph manifolds; see
Theorems 6.3 and 7.1 in~\cite{wald67II}. Conversely, the
connected sum of graph manifolds is a graph manifold;
see the remark on page 91 of \cite{wald67II} or
\cite[Lemma~3.1]{ghs24}.

A $3$-manifold is called irreducible if every embedded $2$-sphere
bounds a ball. In particular, an irreducible $3$-manifold is prime.
The manifold $S^1\times S^2$ plays a
special role in $3$-manifold topology as the only prime manifold
that is not irreducible.

A $2$-torus embedded in a $3$-manifold is said to be incompressible
if the inclusion map induces a monomorphism on fundamental
groups. A $3$-manifold is called atoroidal if the only
incompressible tori it contains are boundary parallel.

If $M$ is an irreducible manifold, it can be cut along a disjoint
collection of incompressible tori into pieces that are either
atoroidal or Seifert fibred. Rather amazingly, the minimal
(with respect to inclusion) collection of such tori is unique
up to isotopy; see \cite[Theorem~1.9]{hatc23}. This unique decomposition
is known as the JSJ decomposition of~$M$, named after
Jaco--Shalen~\cite{jash79} and Johannson~\cite{joha79}.

It turns out that amongst irreducible $3$-manifolds, graph manifolds
are characterised as those whose JSJ decomposition only contains Seifert
fibred pieces. One direction of this statement is obvious; the other
is the content of \cite[Theorem~6.3]{wald67II}. In fact, in the
contemporary literature this characterisation is often taken as
the definition of a graph manifold.
\subsection{Graph links}
A graph link is a link in a closed, orientable $3$-manifold
whose exterior, that is, the complement of an open tubular neighbourhood,
is a graph manifold. Observe that any sublink of a graph link,
including the empty one, is graph; in particular, any manifold
containing a graph link is a graph manifold.
\subsubsection{Graph manifolds prime to $S^1\times S^2$}
A closed, orientable $3$-manifold $M$
is called prime to $S^1\times S^2$ if it does not contain any
summands $S^1\times S^2$ in its prime decomposition, so that $M$
is the connected sum of irreducible manifolds.

In \cite{yano85a,yano85b}, Yano associates with a graph manifold $M$
prime to $S^1\times S^2$ its JSJ complex $\calC_M$, which is a
$1$-dimensional complex built as follows. Write $M$ as the connected sum
$M=M_1\#\cdots\# M_n$ of irreducible manifolds $M_1,\ldots,M_n$. For each
$M_i$ we consider its JSJ decomposition into Seifert fibred pieces.
Thicken the incompressible tori
defining this decomposition into disjointly embedded copies of
$T^2\times[-1,1]$; the closure of
their complement is then a finite collection $\{S_{ij}\}$
of Seifert manifolds with toral boundaries.

Now, for each irreducible summand $M_i$ we build the $1$-complex
consisting of one vertex for each~$S_{ij}$ and
a connecting edge for the gluing of two boundary components.
In the JSJ decomposition it may well happen that one toral
boundary component of a Seifert fibred piece is glued to
another boundary component of the same piece, as we shall see in an
example in Section~\ref{section:cat}, so some $1$-cells in the
complex may form loops.

We then introduce $n-1$ further
edges, connecting a vertex in the graph for $M_i$ with one
in the graph for $M_{i+1}$, $i=1,\ldots,n-1$. The resulting
complex $\calC_M$ does not depend, up to homotopy equivalence, on
the choices in forming the prime decomposition of $M$
and in connecting the $n$ constituent parts.

There is a natural map $\rho\co M\rightarrow\calC_M$, defined
by collapsing the Seifert fibred pieces (possibly with
an open $3$-ball removed if the connected sum
is performed in that piece) of the prime summands
to the corresponding vertex, and the thickened tori as well
as the thickened $2$-spheres where the connected sums are formed
to the corresponding edge. In other words, the neighbourhoods
$T^2\times[-1,1]$ and $S^2\times[-1,1]$ of those tori and spheres
are projected onto the interval $[-1,1]$, regarded as the edge
connecting the vertices in $\calC_M$ representing the adjacent
Seifert fibred piece(s).

Here is the main theorem from~\cite{yano85b}.

\begin{thm}[Yano]
\label{thm:Yano1}
Let $M$ be a graph manifold prime to $S^1\times S^2$.
A homology class $u\in H_1(M)$ can be represented by a graph link
if and only if $\rho_*(u)=0$ in $H_1(\calC_M)$.\qed
\end{thm}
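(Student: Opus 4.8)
The plan is to reduce Yano's theorem to a single homological reformulation coming from the graph-of-spaces structure of $M$, and then to settle the two implications on the two sides of that reformulation. First I would fix the decomposition of $M$ along the family of separating $2$-spheres (from the prime decomposition) and incompressible JSJ tori, with product neighbourhoods $\Sigma_e\times[-1,1]$ of the edge surfaces and complementary Seifert pieces $S_v$, exactly as in the construction of $\mathcal{C}_M$. The associated graph-of-spaces Mayer--Vietoris sequence reads
\[
\bigoplus_e H_1(\Sigma_e)\xrightarrow{\alpha}\bigoplus_v H_1(S_v)\xrightarrow{\beta}H_1(M)\xrightarrow{\delta}\bigoplus_e H_0(\Sigma_e)\xrightarrow{\gamma}\bigoplus_v H_0(S_v).
\]
Since every $\Sigma_e$ and every $S_v$ is connected, $\gamma$ is precisely the simplicial boundary map $C_1(\mathcal{C}_M)\to C_0(\mathcal{C}_M)$, so that $\ker\gamma=Z_1(\mathcal{C}_M)=H_1(\mathcal{C}_M)$; by exactness $\operatorname{im}\delta=H_1(\mathcal{C}_M)$ and $\ker\delta=\operatorname{im}\beta$. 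A direct comparison of definitions identifies the connecting map $\delta$ with the collapse map $\rho_*$, as both record the signed crossings of a $1$-cycle with the edge surfaces. The upshot is the key reformulation
\[
\rho_*(u)=0\iff u\in\operatorname{im}\beta\iff u=\sum_v(\iota_v)_*(u_v)\ \text{ for some }u_v\in H_1(S_v),
\]
that is, $\rho_*(u)=0$ exactly when $u$ is carried by loops each contained in a single Seifert piece.

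For the `if' direction I would start from such a decomposition $u=\sum_v(\iota_v)_*(u_v)$. By Proposition~\ref{prop:Seifert-H1} each Seifert piece is $H_1$-complete, so every $u_v$ is represented by the elliptic part of the critical link of a Bott-integrable contact form on $S_v$; by Lemma~\ref{lem:Lf-graph} this representative is a graph link, and it may be pushed into $\operatorname{int}S_v$. Taking $L=\bigsqcup_v L_v$ then yields a link disjoint from all edge surfaces whose exterior is $\bigcup_v(S_v\setminus\nu L_v)$, glued along the $\Sigma_e$. Each piece $S_v\setminus\nu L_v$ is a graph manifold, gluing along tori preserves this, and gluing along the connect-sum spheres does so as well by \cite[Lemma~3.1]{ghs24}; hence $L$ is a graph link representing $u$.

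For the `only if' direction the goal is to show $\rho_*[L]=0$ for a graph link $L$. Here I would combine two observations. First, $\rho_*[L]=\delta[L]$ lies in $Z_1(\mathcal{C}_M)$, and a graph $1$-cycle has vanishing coefficient on every bridge; since the connect-sum spheres and the \emph{separating} JSJ tori correspond exactly to the bridges of $\mathcal{C}_M$, the crossings of $L$ with those surfaces are automatically irrelevant, and only the algebraic intersections of $L$ with the non-separating JSJ tori can contribute. Second, because the exterior of a graph link is a graph manifold, each boundary torus $\partial\nu L_k$ is a boundary component of a Seifert piece of that exterior and is therefore vertical in it; this is the ingredient that lets one isotope $L$ to be vertical, and hence disjoint from the JSJ tori of $M$. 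Once $L$ is disjoint from every JSJ torus its algebraic intersection with each of them vanishes, and $\rho_*[L]=0$ follows from the reformulation.

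I expect this last isotopy — the structure statement that a graph link can be made vertical with respect to, and thus disjoint from, the canonical JSJ tori of the ambient manifold — to be the main obstacle. It is the one genuinely three-dimensional-topological step: a careful argument has to rule out the non-separating tori meeting $L$ essentially, by appealing to incompressibility together with the uniqueness of the JSJ decomposition applied simultaneously to $M$ and to the exterior of~$L$. The remaining steps are either formal (the Mayer--Vietoris reformulation, the bridge observation) or already supplied by the Seifert results of this paper.
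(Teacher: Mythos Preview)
The paper does not prove Theorem~\ref{thm:Yano1}; the trailing \qed\ signals that it is quoted verbatim from Yano~\cite{yano85b} as an external input. So there is no proof in the paper to compare your proposal against.

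On the substance of your sketch: the Mayer--Vietoris reformulation identifying $\rho_*$ with the connecting homomorphism is correct and standard. Your `if' direction---realising each $u_v\in H_1(S_v)$ by the elliptic sublink of a Bott integral via Proposition~\ref{prop:Seifert-H1}, then using that sublinks of graph links are graph and that gluing graph exteriors along tori and spheres preserves the graph property---is a valid argument. It is in fact the contact-geometric alternative the paper itself advertises immediately after stating the theorem (``Our argument in Section~\ref{section:Seifert} gives an alternative proof of that part''), though Yano's own proof of this direction in~\cite{yano85b} is purely topological and does not pass through contact forms.

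For the `only if' direction you have correctly located both the position and the depth of the difficulty, but you have not filled it. The assertion that a graph link can be isotoped off the JSJ tori of the ambient manifold is precisely the substantive three-manifold topology in~\cite{yano85a,yano85b}: one must compare the JSJ decomposition of the link exterior with that of~$M$ and argue via uniqueness and incompressibility, and this is delicate (Seifert fibrations on the pieces of the exterior need not match those on the pieces of~$M$). Your proposal names this step but does not carry it out, so as written it is an outline rather than a proof; the missing content is exactly Yano's contribution.
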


As a crucial preliminary step for this theorem, Yano establishes
the particular case that every element in the first homology
group of a Seifert manifold can be represented by a graph
link \cite[Proposition~2.2]{yano85b}. Our argument in
Section~\ref{section:Seifert} gives an
alternative proof of that part.
\subsubsection{Graph manifolds containing $(S^1\times S^2)$-summands}
If the graph manifold $M$ is not prime to $S^1\times S^2$, one
fixes a description of $M$ as a connected sum
\[ M=N\#k(S^1\times S^2),\]
with $N$ a graph manifold prime to $S^1\times S^2$.
This induces a splitting $H_1(M)=H_1(N)\oplus\Z^k$, and we write
elements of $H_1(M)$ as pairs $(a,b)$ with respect to this splitting.
For $b\neq 0$, we write $m(b)$ for the largest natural number
dividing $b\in\Z^k$, and we set $m(0)=0$.
The JSJ complex $\calC_N$ and the map
$\rho^N\co N\rightarrow\calC_N$ are constructed as before.

Here is Yano's homological characterisation of graph links
\cite[Theorem~3.3]{yano85b} in this general setting.

\begin{thm}[Yano]
\label{thm:Yano2}
Let $M=N\#k(S^1\times S^2)$ be a graph manifold, with $N$
prime to $S^1\times S^2$.
A homology class $u=(a,b)\in H_1(M)=H_1(N)\oplus\Z^k$ can be represented
by a graph link if and only if $\rho^N_*(a)$ is divisible by $m(b)$.\qed
\end{thm}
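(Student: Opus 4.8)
The plan is to derive Theorem~\ref{thm:Yano2} from the prime-to-$S^1\times S^2$ case, Theorem~\ref{thm:Yano1}, by isolating the $k$ handle summands through their non-separating $2$-spheres. Fix spheres $\Sigma_1,\dots,\Sigma_k\subset M$, one in each $S^1\times S^2$ summand, dual to the standard generators of the $\Z^k$-factor; then for $u=(a,b)$ the algebraic intersection number $u\cdot[\Sigma_i]$ equals $b_i$. Collapsing each summand to a point gives a map $\rho\co M\to\calC_N$ with $\rho_*(u)=\rho^N_*(a)$, so the whole statement is governed by the single class $\rho^N_*(a)\in H_1(\calC_N)$ together with the numbers $b_i$, whose greatest common divisor is $m(b)$. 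The boundary cases already fix the shape of the answer: when $k=0$ (so $b$ is empty and $m(b)=0$) divisibility by $m(b)$ means $\rho^N_*(a)=0$, which is precisely Theorem~\ref{thm:Yano1}; and when some $b_i=\pm1$, so $m(b)=1$, the condition is vacuous, reflecting that a single strand threading one summand can absorb any amount of $\calC_N$-homology.

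For the `if' direction I would build the link in two disjoint pieces. By hypothesis $\rho^N_*(a)=m(b)\,w$ for some $w\in H_1(\calC_N)$. Choose a $1$-cycle $\beta\subset N$ with $\rho^N_*[\beta]=w$ and set $m:=m(b)$. Since $m=\gcd(b_1,\dots,b_k)$ divides every $b_i$, the integers $b_i':=b_i/m$ are well defined. Take $m$ parallel copies of $\beta$ and reroute each so that it threads the sphere $\Sigma_i$ exactly $b_i'$ times before closing up inside the handle summands; the resulting link $L_1$ then has total intersection $m\,b_i'=b_i$ with each $\Sigma_i$ and $N$-class $m[\beta]$, hence $\rho$-image $mw=\rho^N_*(a)$. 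Setting $a_0:=a-m[\beta]$, we get $\rho^N_*(a_0)=0$, so by Theorem~\ref{thm:Yano1} the class $a_0$ is represented by a graph link $L_0\subset N$. Then $L_0\cup L_1$ represents $u=(a,b)$.

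For the `only if' direction I would argue structurally in the exterior $X=M\setminus\nu L$, which is a graph manifold. After minimising intersections each $\Sigma_i$ meets $L$ in $|b_i|$ points and restricts to a properly embedded planar surface $P_i\subset X$ with $|b_i|$ boundary circles on $\partial\nu L$. Putting $P_i$ into horizontal/vertical normal form with respect to the Seifert fibrations on the pieces of $X$, the way $P_i$ runs across the JSJ tori of $X$ forces the crossing cycle of $L$, namely $\rho_*([L])=\rho^N_*(a)$, to be a combination of the sheet numbers of the $P_i$, and each such sheet number divides $b_i$. Letting $i$ range over all summands and combining these divisibilities yields divisibility by $\gcd(b_1,\dots,b_k)=m(b)$.

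The hard part is the sufficiency construction: producing $L_1$ explicitly and verifying that $M\setminus\nu(L_0\cup L_1)$ is genuinely a graph manifold. The $m$ strands follow $\beta$, but wherever $\beta$ crosses a JSJ torus of $N$ in a way that obstructs a Seifert structure they must be rerouted through a handle summand, so that in the exterior each such crossing is replaced by a fibred product piece $S^1\times(\text{planar surface})$ coming from that summand; the complementary pieces are then Seifert pieces inherited from the JSJ decomposition of $N$ together with these product pieces, glued along tori via the sewing lemma \cite[Lemma~3.4]{ghs24} (or Waldhausen's closure of graph manifolds under torus gluing). This rerouting step is exactly where divisibility by $m(b)$ is indispensable and is the main technical obstacle, with the analogous subtlety on the necessity side being the horizontal/vertical surface analysis that converts the graph-manifold hypothesis on $X$ into the arithmetic divisibility.
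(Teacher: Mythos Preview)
The paper does not prove this statement: Theorem~\ref{thm:Yano2} is quoted verbatim from Yano \cite[Theorem~3.3]{yano85b} and closed with a \qed, just like Theorem~\ref{thm:Yano1}. There is therefore no proof in the paper to compare your proposal against; the authors use Yano's result as a black box in Section~\ref{section:proof}.

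As to your sketch on its own merits: it is a reasonable outline of how one might expect the argument to go, but it is not a proof. You yourself flag the essential gap in the `if' direction---you construct a link $L_0\cup L_1$ representing the right homology class, but the claim that its exterior is a graph manifold is only asserted, with a heuristic about ``rerouting through handle summands'' and ``fibred product pieces'' standing in for the actual verification. Note also that invoking the sewing lemma \cite[Lemma~3.4]{ghs24} here is a category error: that lemma is about matching Bott-integrable contact forms across torus boundaries, not about the purely topological question of whether a link exterior is a graph manifold. In the `only if' direction your horizontal/vertical surface idea is plausible in spirit, but the passage from ``each sheet number divides $b_i$'' to ``$\rho^N_*(a)$ is divisible by $\gcd(b_i)$'' is not justified, and the claim that one can always minimise $\#(L\cap\Sigma_i)$ down to $|b_i|$ while keeping the exterior a graph manifold needs an argument. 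If you want to actually prove the theorem rather than cite it, you should consult Yano's original paper.
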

\subsection{Graph manifolds prime to $S^1\times S^2$}
We can now prove the `if' part of Theorem~\ref{thm:main} for
any graph manifold $M$ prime to $S^1\times S^2$. Write
$M$ as a connected sum $M=M_1\#\cdots\# M_n$ of irreducible
graph manifolds. Construct an overtwisted Bott-integrable
contact structure $\xi_0$ on $M$ as follows:
\begin{itemize}
\item[(i)] First we construct a Bott-integrable contact structure
on each $M_i$ by sewing together $H_1$-complete contact structures
on the Seifert pieces $S_{ij}$ of the JSJ decomposition of~$M_i$.
\item[(ii)] Form the connected sum of the $M_i$ as in \cite[Section~6]{ghs24}.
\item[(ii)] If necessary, perform a full Lutz twist along an
elliptic critical orbit to make the contact structure overtwisted.
\end{itemize}

From Section~\ref{section:Euler} we know that the Poincar\'e
dual of the Euler class $e(\xi_0)$ is represented by a graph link.
Hence, by Theorem~\ref{thm:Yano1}, we know that $\rhod(e(\xi_0))=0$,
where for ease of notation we set
\[ \rhod:=\rho_*\circ\PD\co H^2(M)\longrightarrow H_1(\calC_M).\]

Now let $\eta$ be any tangent $2$-plane field on $M$ with an Euler class
$e(\eta)$ whose Poincar\'e dual can be represented by a graph link, so that
$\rhod(e(\eta))=0$. Then for the obstruction class $d^2(\xi_0,\eta)$
we find
\[ 2\rhod\bigl(d^2(\xi_0,\eta)\bigr)=
\rhod\bigl((e(\xi_0)-e(\eta)\bigr)=0.\]
Since $H_1(\calC_M)$ is a free abelian group, this implies
$\rhod(d^2(\xi_0,\eta))=0$.

The class $\PD(d^2(\xi_0,\eta))$ can be represented by an oriented
link $L_{\eta}\subset M$ intersecting the neighbourhoods $S^2\times[-1,1]$
and $T^2\times[-1,1]$ of the spheres defining the connected sum and
the tori defining the JSJ decomposition, respectively, along
intervals $*\times[-1,1]$, each of which is mapped
to an edge of~$\calC_M$. The fact that the class
$\rhod(d^2(\xi_0,\eta))=[\rho(L_{\eta})]\in H_1(\calC_M)$ vanishes implies that 
the intersection number of the $1$-cycle $\rho(L_\eta)\subset\calC_M$ with
any point in $\calC_M$ is zero. For $L_{\eta}\subset M$ this means that
the link traverses each neighbourhood $S^2\times[-1,1]$ or
$T^2\times[-1,1]$ zero times, when counted with orientation.
Thus, by link connected sums along pairs of positive and
negative strands passing through such a neighbourhood, followed by
an isotopy, we may assume that each component of $L_{\eta}$ is contained
entirely in one of the Seifert fibred components~$S_{ij}$.
The collection of all link components contained in a single $S_{ij}$
defines a class in $H_1(S_{ij})$.

In summary, the homology class $\PD(d^2(\xi_0,\eta))\in H_1(M)$
can be written as a sum of homology classes coming from the
inclusions $S_{ij}\subset M$. As above, for this inclusion to make
sense we have to remove an open ball from the $S_{ij}$ where
the connected sum with $M_{i\pm1}$ is performed.

Now the $H_1$-completeness of the $S_{ij}$ (Proposition~\ref{prop:Seifert-H1})
allows us to modify $\xi_0$ into a Bott-integrable contact structure
on $M$ homotopic to $\eta$ as a tangent $2$-plane field as
in the proof of Lemma~\ref{lem:H1-ot}. This completes
the proof of Theorem~\ref{thm:main} for the case under consideration.

Observe that, by Yano's criterion, we may assume the link $L_{\eta}$ to
be a graph link. It will then remain a graph link after the
described modification, since (self) link connected sum preserves
the graph link property. However, the sublink of critical elliptic
orbits that $H_1$-completeness allows us to construct need not be
equal to this graph link; in general,
it merely represents the same class in homology.
\subsection{Graph manifolds containing $(S^1\times S^2)$-summands}
Finally, we come to the general case of the `if' direction
in Theorem~\ref{thm:main}. Write $M=N\#k(S^1\times S^2)$ with
$N$ a graph manifold prime to $S^1\times S^2$.

Let $\eta$ be a tangent $2$-plane field on $M$ with
\[ \PD(e(\eta))=(a_{\eta},b_{\eta})\in H_1(N)\oplus\Z^k=H_1(M)\]
represented by a graph link. On $N$, as before, we choose a Bott-integrable
contact structure that is $H_1$-complete on the Seifert
fibred pieces coming from the JSJ decompositions of the
irreducible summands of~$N$. On $k(S^1\times S^2)$ we may choose,
by \cite[Theorem~1.9]{ghs24}, a Bott-integrable contact structure
whose Euler class has the Poincar\'e dual~$b_{\eta}$. Write
$\xi_0$ for the Bott-integrable contact structure on $M$
obtained via the connected sum.

Then $\PD(e(\xi_0))=(a_0,b_{\eta})$ for some $a_0\in H_1(N)$.
Now, the class
\[ (a_0-a_{\eta},0)=\PD\bigl(e(\xi_0)-e(\eta)\bigr)\]
is representable by a graph link, and it equals
$2\PD(d^2(\xi_0,\eta))$. This allows us to write
\[ \PD\bigl(d^2(\xi_0,\eta)\bigr)=(a_{0\eta},0)\]
with $2a_{0\eta}=a_0-a_{\eta}$.
Moreover, by Theorem~\ref{thm:Yano2},
the class $\rho^N_*(a_0-a_{\eta})\in H_1(\calC_N)$ is divisible by $m(0)=0$,
which forces it to equal~$0$. As above, using the fact that
$H_1(\calC_N)$ is a free abelian group, we conclude that $\rho^N_*(a_{0\eta})=0$.
As in the previous case we then argue that the Poincar\'e dual
$(a_{0\eta},0)$ of the obstruction class $d^2(\xi_0,\eta)$
can be represented by a link with each component contained in
a Seifert fibred piece of~$N$, and the proof concludes as before.
\section{The mapping torus of Arnold's cat map}
\label{section:cat}
In this section we illustrate many of the notions used in the
proof of Theorem~\ref{thm:main} with an instructive example.

The diffeomorphism $\psi$ of the $2$-torus $T^2=\R^2/\Z^2$ defined by
the matrix
\[ A=\begin{pmatrix}
2 & 1\\
1 & 1
\end{pmatrix}
\in\SL(2,\Z),\]
known as Arnold's cat map, possesses several interesting dynamical features,
in spite of its apparent simplicity. For instance, $\psi$ is an
Anosov diffeomorphism, an ergodic map, and mixing. The torus
automorphism $\psi$ is of hyperbolic type, with eigenvalues
$\lambda:=(3+\sqrt{5})/2$ and $(3-\sqrt{5})/2=\lambda^{-1}$.

Let $M=M(\psi)$ be the mapping torus of $\psi$, that is, the torus bundle
over $S^1$ defined as the quotient space of $T^2\times\R$ under the
$\Z$-action generated by
\[ (x,y;t)\longmapsto (2x+y,x+y;t-1).\]
The $T^2$-fibration of $M$ is induced by the map $(x,y;t)\mapsto t$.
This aspherical (and hence irreducible) $3$-manifold carries Sol geometry
and is not Seifert fibred~\cite{scot83}.
\subsection{Topological data of $M(\psi)$}
The inclusion $T^2\rightarrow M$ of a fibre is $\pi_1$-injective
by the homotopy exact sequence of the fibration $M\rightarrow S^1$,
so the fibre $T^2\subset M$ is incompressible. In particular,
$M$ is not atoroidal.
This implies that the JSJ decomposition requires cutting along
at least one torus, and indeed one will suffice: when an open
tubular neighbourhood of a fibre is removed, the complement
$T^2\times[\varepsilon,1-\varepsilon]$ is $S^1$-fibred.
So the JSJ complex of $M$ consists of a single $1$-cell
attached to a single $0$-cell.

The cat map $\psi$ has a single fixed point $(x,y)=(0,0)$.
So the standard cell decomposition of $T^2$ (with the zero
cell taken as the fixed point of~$\psi$) gives rise to a cell
decomposition of $M$ with an additional $(k+1)$-cell for each
$k$-cell of~$T^2$. This allows one to read off the following
presentation of the fundamental group:
\[ \pi_1(M)\cong\langle a,b,c\,|\,[a,b],\, acb^{-1}a^{-2}c^{-1},\,
bcb^{-1}a^{-1}c^{-1}\rangle.\]
The first homology group $H_1(M)$, as the abelianisation of
$\pi_1(M)$, is then isomorphic to~$\Z$, generated by the
loop in $t\mapsto (0,0;t)$, $t\in [0,1]$, corresponding to~$c$.

It follows that Yano's homomorphism $\rho_*\co H_1(M)\rightarrow
H_1(\calC_M)$ is an isomorphism. By Theorem~\ref{thm:Yano1},
every graph link in $M$ must be homologically trivial.
As a first consequence we note that, thanks to Lemmas \ref{lem:Euler-formula}
and \ref{lem:Lf-graph}, any Bott-integrable contact structure
on $M(\psi)$ must have trivial Euler class.
\subsection{A family of integrable contact structures on~$M(\psi)$}
Consider a contact form
\[ \alpha=h_1(t)\,\rmd x+h_2(t)\,\rmd y\]
of Lutz type on $T^2\times\R$. The condition for this to be invariant
under the $\Z$-action with quotient $M(\psi)$ is that
\begin{equation}
\label{eqn:h1h2}
\left\{\begin{array}{rcl}
h_1(t) & = & 2h_1(t-1)+h_2(t-1),\\
h_2(t) & = & h_1(t-1)+h_2(t-1).
\end{array}\right.
\end{equation}
On the other hand, if one takes $u,v$ to be coordinates on $\R^2$
with respect to a basis of eigenvectors of $A$ for the
eigenvalues $\lambda,\lambda^{-1}$, the invariant $1$-form $\alpha$
should look like
\[ \alpha=\lambda^t\,\rmd u+\lambda^{-t}\,\rmd v,\]
which is more obviously adapted to Sol geometry~\cite{scot83}.

With some linear algebra, one arrives at the following
proposition.

\begin{prop}
A countable family $\alpha_n$, $n\in\N_0$, of Bott-integrable
contact forms on $M(\psi)$ can be defined by the formula
\begin{eqnarray*}
\alpha_n & = &
\frac{\sqrt{2}}{\sqrt{5}}\Biggl[
\Bigl(\sin\bigl(\frac{\pi}{4}+2n\pi t\bigr)\lambda^t-
\cos\bigl(\frac{\pi}{4}+2n\pi t\bigr)\lambda^{-t}\Bigr)\,\rmd x\\
&& \mbox{}+\Bigl(\frac{\sqrt{5}-1}{2}\sin\bigl(\frac{\pi}{4}+2n\pi t\bigr)
\lambda^t+ \frac{\sqrt{5}+1}{2}\cos\bigl(\frac{\pi}{4}+2n\pi t\bigr)
\lambda^{-t}\Bigr)\,\rmd y\Biggr].
\end{eqnarray*}
\end{prop}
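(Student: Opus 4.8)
The plan is to verify three properties of each $\alpha_n=h_1(t)\,\rmd x+h_2(t)\,\rmd y$: that it descends to a well-defined $1$-form on the mapping torus $M(\psi)$, that it satisfies the contact condition, and that it admits a Morse--Bott integral. The first two are conditions on the planar curve $t\mapsto\gamma(t):=(h_1(t),h_2(t))$, while the third follows from the product structure transverse to the fibres.

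For the descent to $M(\psi)$ I would check the twisted periodicity \eqref{eqn:h1h2}, which in vector form reads $\gamma(t)=A\gamma(t-1)$. The cleanest route is to pass to the eigenbasis $w_\lambda,w_{\lambda^{-1}}$ of $A$: writing $\gamma=P\,w_\lambda+Q\,w_{\lambda^{-1}}$, a short computation identifies $P(t)$ and $Q(t)$ with constant multiples of $\sin(\tfrac\pi4+2n\pi t)\,\lambda^t$ and $\cos(\tfrac\pi4+2n\pi t)\,\lambda^{-t}$. Since $n$ is an integer, the trigonometric factors are $1$-periodic, so $P(t)=\lambda P(t-1)$ and $Q(t)=\lambda^{-1}Q(t-1)$ hold automatically, which is exactly $\gamma(t)=A\gamma(t-1)$. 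Alternatively one verifies \eqref{eqn:h1h2} directly, the only inputs being elementary identities such as $2\lambda^{-1}+\tfrac{\sqrt5-1}{2}\lambda^{-1}=1$, which reduce to $\lambda+\lambda^{-1}=3$ and $\lambda\lambda^{-1}=1$. This is the ``linear algebra'' alluded to above.

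For the contact condition, recall that $\alpha_n\wedge\rmd\alpha_n>0$ is equivalent to $\Delta=h_1h_2'-h_1'h_2<0$. Working again in the eigenbasis, $\Delta=(\det M)(PQ'-P'Q)$, where $M$ is the change-of-basis matrix with $\det M=-\sqrt5$. Differentiating the explicit $P,Q$ and using the cancellation of the $P^2$- and $Q^2$-terms, I expect $PQ'-P'Q=c^2\bigl(2n\pi+(\ln\lambda)\cos(4n\pi t)\bigr)$ for a positive constant~$c$. The bracket is strictly positive for every $t$: for $n=0$ it equals $\ln\lambda>0$, and for $n\ge1$ it is bounded below by $2n\pi-\ln\lambda\ge 2\pi-\ln\lambda>0$, since $\ln\lambda<2\pi$. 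Hence $\Delta<0$ throughout, so each $\alpha_n$ is a contact form. I regard this sign estimate, together with spotting the eigenbasis normalisation that collapses the determinant to a single trigonometric term, as the main point of the argument.

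Finally, for Bott-integrability I would exhibit the integral $f([x,y,t])=\cos(2\pi t)$. As for any Lutz-type form, the Reeb field of $\alpha_n$ is tangent to the fibres $\{t=\mathrm{const.}\}$, so $\rmd f(R)=0$; and $\cos(2\pi t)$ is $1$-periodic in~$t$, hence descends to $M(\psi)$. It is Morse--Bott, its critical set consisting of the two torus fibres over $t\equiv 0$ and $t\equiv\tfrac12$, along which $f$ attains its maximum and minimum. This is precisely the situation of Theorem~\ref{thm:critical-T}, and by Lemma~\ref{lem:Euler-formula} these critical tori contribute nothing to the Euler class, consistent with the earlier observation that any Bott-integrable contact structure on $M(\psi)$ must have vanishing Euler class.
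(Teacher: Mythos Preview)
Your proposal is correct and follows essentially the same approach as the paper: verify the twisted periodicity~\eqref{eqn:h1h2}, compute the contact determinant $h_1h_2'-h_1'h_2$, and observe that any Morse function of $t\in\R/\Z$ serves as a Bott integral. The paper's proof is terser---it simply asserts that the invariance check is straightforward and records the explicit formula $h_1h_2'-h_1'h_2=-\tfrac{2}{\sqrt{5}}\bigl(2n\pi+\cos(4n\pi t)\log\lambda\bigr)$---whereas you make explicit the eigenbasis decomposition that the paper only hints at via the Sol-adapted form $\lambda^t\,\rmd u+\lambda^{-t}\,\rmd v$; your computation recovers exactly the paper's determinant.
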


\begin{proof}
The verification of the invariance condition \eqref{eqn:h1h2}
is straightforward. The factor $\sqrt{2}/\sqrt{5}$ has been chosen
for purely aesthetic reasons; it ensures that $h_1(0)=0$ and
$h_2(0)=1$, so that the sequence
\[ h_2(1),h_1(1),h_2(2),h_1(2),h_2(3),h_1(3),\ldots\]
equals the Fibonacci sequence $1,1,2,3,5,8,\ldots$

One computes that
\[ h_1(t)h_2'(t)-h_1'(t)h_2(t)=-\frac{2}{\sqrt{5}}\bigl(
2n\pi+\cos(4n\pi t)\log\lambda\bigr),\]
which is negative for all $n\in\N_0$ and $t\in\R$, so the
$\alpha_n$ are contact forms.

A Bott integral for $\alpha_n$ is given by the lift of any
Morse function in the variable $t\in\R/\Z$.
\end{proof}

As tangent $2$-plane fields, the contact structures $\ker\alpha_n$
are all homotopic to the trivial $2$-plane bundle $\ker\rmd t$. The Reeb
vector fields of the contact forms $\alpha_n$ are tangent to the fibres.
\subsection{The classification of Bott-integrable contact structures
on~$M(\psi)$}
The special linear group $\SL(2,\Z)$ is generated by the matrices
\[ S=\begin{pmatrix}
0  & 1 \\
-1 & 0
\end{pmatrix}
\;\;\;\text{and}\;\;\;
T=\begin{pmatrix}
1 & 1 \\
0 & 1
\end{pmatrix}.\]
The matrix $A$ defining the Arnold cat map can be factorised as
\[ A=-ST^{-1}ST^{-2}S,\]
which is conjugate to $T^{-3}S$. Therefore, by Honda's
classification (up to isotopy) of tight contact structures
on torus bundles, see specifically the table on page~90 of~\cite{hond00},
all tight contact structures on $M(\psi)$ are universally tight, that is,
their lift to~$\R^3$ is tight.
There is a unique tight structure for each value $2n\pi$, $n\in\N$,
of its Giroux torsion \cite[Definition~1.2]{giro00},
and two with zero Giroux torsion. For the relation between the
Giroux torsion and the `twisting in the $S^1$-direction' used by
Honda, see \cite[page~86]{hond00}.

Giroux's classification~\cite{giro00} of tight contact structures
on torus bundles is up to diffeomorphism, and according to
\cite[Theorem~1.3]{giro00}, there is a unique one for each value
$2n\pi$ of the Giroux torsion, including the case $n=0$.

The discrepancy between the classification up to isotopy and that up to
diffeomorphism comes from the diffeomorphism of $M(\psi)$ which acts
as minus the identity on the torus fibres and changes the coorientation
of the contact structure. For positive torsion, this can be effected by an
isotopy (a shift transverse to the fibres), but not for zero torsion.

Given this information, we can now formulate the complete
classification of the Bott-integrable contact structures on~$M(\psi)$.

\begin{prop}
The $\ker\alpha_n$, $n\in\N_0$, constitute a complete list of
the tight contact structures on $M(\psi)$ and, as we have seen, they are
all Bott integrable. An overtwisted contact structure on $M(\psi)$
is Bott integrable if and only if its Euler class is trivial. There
is a countable family of such overtwisted structures, distinguished by
their $3$-dimensional homotopy invariant.
\end{prop}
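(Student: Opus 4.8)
The statement combines three assertions, and my plan is to dispatch them in order, leaning on the classifications and homological data already assembled above.

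For the tight classification, the preceding discussion has, via Honda~\cite{hond00} and Giroux~\cite{giro00}, already reduced matters to a matching problem: up to diffeomorphism the tight contact structures on $M(\psi)$ are indexed by their Giroux torsion $2n\pi$, $n\in\N_0$, with exactly one for each value. So the plan is to locate $\ker\alpha_n$ within this list. First I would verify that each $\ker\alpha_n$ is universally tight by passing to the universal cover $\R^3$, where the Sol-adapted form $\lambda^t\,\rmd u+\lambda^{-t}\,\rmd v$ underlying $\alpha_0$ is the standard tight model. Next I would compute the Giroux torsion of $\ker\alpha_n$ to be $2n\pi$: the term $\frac{\pi}{4}+2n\pi t$ in the arguments of the sine and cosine makes the characteristic curve $t\mapsto(h_1(t),h_2(t))$ wind $n$ extra full turns per period, beyond the twisting forced by the monodromy~$A$. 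Since Giroux's list has a single structure per torsion value, the $\ker\alpha_n$ are then seen to be pairwise non-diffeomorphic and to exhaust it. One point I would flag explicitly: up to \emph{isotopy} there are two tight structures of zero torsion, related by the coorientation-reversing diffeomorphism $g$ that acts as $-\mathrm{id}$ on the fibres. The resulting second class $g_*\ker\alpha_0$ is again Bott integrable (with Bott integral $f\circ g^{-1}$), so the assertion that every tight structure is Bott integrable survives this refinement.

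For the overtwisted dichotomy, the `only if' is already recorded above (a Bott-integrable structure on $M(\psi)$ has trivial Euler class), and the full equivalence follows by feeding the topology of $M(\psi)$ into Theorem~\ref{thm:main}. That theorem makes Bott integrability of an overtwisted~$\xi$ equivalent to $\PD(e(\xi))$ being representable by a graph link. As $M(\psi)$ is aspherical, hence irreducible and prime to $S^1\times S^2$, Yano's Theorem~\ref{thm:Yano1} applies and turns this into the condition $\rho_*(\PD(e(\xi)))=0$ in $H_1(\calC_M)$; and since $\rho_*\co H_1(M)\to H_1(\calC_M)$ was shown to be an isomorphism, this holds precisely when $\PD(e(\xi))=0$, that is, $e(\xi)=0$. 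The empty graph link represents the zero class, which yields the `if' direction directly.

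For the final count, I would invoke Eliashberg~\cite{elia89} to pass from overtwisted contact structures (up to isotopy) to homotopy classes of tangent $2$-plane fields. Because $H^2(M)\cong H_1(M)\cong\Z$ is torsion-free, the obstruction $d^2$ is detected by the Euler class, so triviality of $e$ fixes the homotopy type over the $2$-skeleton uniquely, namely that of $\ker\rmd t$. The homotopy classes realising this $2$-skeletal datum form a torsor over $H^3(M)\cong\Z$ via the $3$-dimensional (Hopf) invariant, hence a countable set; each gives, by Eliashberg and the dichotomy above, a Bott-integrable overtwisted contact structure with trivial Euler class, and the $3$-dimensional invariant separates them. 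The assembly here and in the previous paragraph is essentially formal, resting on the cited classifications and the homology already computed, so the main obstacle is the recognition step in the first paragraph: confirming universal tightness of each $\ker\alpha_n$ and, above all, pinning its Giroux torsion to $2n\pi$, so that the explicit family is verified to realise Giroux's classification with neither gap nor duplication.
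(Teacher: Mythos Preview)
Your proposal is correct and tracks the paper's argument closely for the tight classification (universal tightness via the lift to~$\R^3$, Giroux torsion read off from the winding of $t\mapsto(h_1(t),h_2(t))$) and for the final count (no $2$-torsion in $H^2$, so Euler class plus $3$-dimensional invariant classify). Your extra care about the isotopy-versus-diffeomorphism discrepancy at zero torsion is a welcome clarification.

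The one genuine difference is in the `if' direction of the overtwisted dichotomy. You deduce it abstractly from Theorem~\ref{thm:main} together with Yano's criterion and the computation $\rho_*\cong\mathrm{id}$, whereas the paper bypasses that machinery entirely: it simply constructs each Bott-integrable overtwisted structure with trivial Euler class as the contact connected sum of $\bigl(M(\psi),\ker\alpha_0\bigr)$ with a suitable overtwisted structure on~$S^3$ (all of which are Bott integrable by \cite[Theorem~1.9]{ghs24}). Your route is logically tidy and exercises the main theorem; the paper's is more constructive and self-contained, in keeping with the section's role as an explicit worked example rather than a further application of Section~\ref{section:proof}.
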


\begin{proof}
On $\R^3$, the $1$-form $\alpha_n$, just like any $1$-form of Lutz type,
defines the standard tight contact structure; see \cite[Lemme~1]{giro99}
for the simple argument. For $n=0$, the coefficient function $h_2$
of $\rmd y$ is always positive, so the Giroux torsion of $\ker\alpha_0$
is zero. An increase of $n$ by $1$ introduces one additional
full twist around the origin by the curve $t\mapsto (h_1(t),h_2(t))$
on the interval $t\in [0,1]$, which implies an additional $2\pi$ Giroux
torsion.

As there is no $2$-torsion in $H^2(M(\psi))$, the overtwisted contact
structures are classified by their Euler class and their $3$-dimensional
homotopy invariant. As we have seen, the Bott-integrable ones must
have trivial Euler class, so we can obtain all of them simply
by taking the connected sum of $\bigl(M(\psi),\ker\alpha_0\bigr)$, say,
with any of the overtwisted contact structures on~$S^3$, which are
Bott integrable by \cite[Theorem~1.9]{ghs24}.
\end{proof}

Notice that triviality of the Euler class means that the contact structures
are trivial as abstract $2$-plane bundles, but as tangent $2$-plane
fields they are distinguished by the $3$-dimensional homotopy invariant.
\begin{ack}
This work is part of a project in the
Son\-der\-for\-schungs\-be\-reich TRR 191
\textit{Symplectic Structures in Geometry, Algebra and Dynamics},
funded by the DFG (Projektnummer 281071066 -- TRR 191). J.~H.\ is
partially supported by the Fondation Courtois.
\end{ack}


\begin{thebibliography}{10}
%
\bibitem{alve16}
\textsc{M. R. R. Alves},
Cylindrical contact homology and topological entropy,
\textit{Geom. Topol.}
\textbf{20} (2016), 3519--3569.
%
\bibitem{ach19}
\textsc{M. R. R. Alves, V. Colin and K. Honda},
Topological entropy for Reeb vector fields in dimension three via open
book decompositions,
\textit{J. \'Ec. polytech. Math.}
\textbf{6} (2019), 119--148.
%
\bibitem{bofo04}
\textsc{A. V. Bolsinov and A. T. Fomenko},
\textit{Integrable Hamiltonian Systems -- Geometry, Topology, Classification},
Chapman \& Hall/CRC, Boca Raton, FL (2004).
%
\bibitem{elia89}
\textsc{Ya. Eliashberg},
Classification of overtwisted contact structures on $3$-manifolds,
\textit{Invent. Math.}
\textbf{98} (1989), 623--637.
%
\bibitem{geig08}
\textsc{H. Geiges},
\textit{An Introduction to Contact Topology},
Cambridge Stud. Adv. Math. \textbf{109},
Cambridge University Press, Cambridge (2008).
%
\bibitem{ghs24}
\textsc{H. Geiges, J. Hedicke and M. Sa\u{g}lam},
Bott-integrable Reeb flows on $3$-manifolds,
\textit{J. Lond. Math. Soc. (2)}\/
\textbf{109} (2024), Paper No. e12859, 42 pp.
%
\bibitem{geth23}
\textsc{H. Geiges and N. Thies},
Klein bottles in lens spaces,
\textit{Involve}
\textbf{16} (2023), 621--636.
%
\bibitem{giro99}
\textsc{E. Giroux},
Une infinit\'e de structures de contact tendues sur une infinit\'e
de vari\'et\'es,
\textit{Invent. Math.}
\textbf{135} (1999), 789--802.
%
\bibitem{giro00}
\textsc{E. Giroux},
Structures de contact en dimension trois
et bifurcations des feuilletages de surfaces,
\textit{Invent. Math.}
\textbf{141} (2000), 615--689.
%
\bibitem{hatc23}
\textsc{A. Hatcher},
\textit{Notes on Basic $3$-Manifold Topology},
Cornell University, last revised in 2023;
available at \verb+https://pi.math.cornell.edu/~hatcher/+
%
\bibitem{hond00}
\textsc{K. Honda},
On the classification of tight contact structures~II,
\textit{J. Differential Geom.}
\textbf{55} (2000), 83--143.
%
\bibitem{jash79}
\textsc{W. H. Jaco and P. B. Shalen},
Seifert fibred spaces in $3$-manifolds,
\textit{Mem. Amer. Math. Soc.}
\textbf{21}, no.~220 (1979).
%
\bibitem{jane83}
\textsc{M. Jankins and W. D. Neumann},
\textit{Lectures on Seifert Manifolds},
Brandeis Lecture Notes \textbf{2},
Brandeis University, Waltham, MA (1983);
available at\\
\verb+http://www.math.columbia.edu/~neumann/preprints/+
%
\bibitem{joha79}
\textsc{K. Johannson},
\textit{Homotopy Equivalences of $3$-Manifolds with Boundaries},
Lecture Notes in Math. \textbf{761},
Springer-Verlag, Berlin (1979).
%
\bibitem{kato09}
\textsc{A. Katok},
Open problems in elliptic dynamics,
transcript of a talk at the Penn State seminar
\textit{Dynamics and its working tools},
notes by V.~Climenhaga,
September~1 (2009).
%
\bibitem{masc11}
\textsc{L. Macarini and F. Schlenk},
Positive topological entropy of Reeb flows on spherizations,
\textit{Math. Proc. Cambridge Philos. Soc.}
\textbf{151} (2011), 103--128.
%
\bibitem{mepa78}
\textsc{W. H. Meeks, III, and J. Patrusky},
Representing homology classes by embedded circles on a compact surface,
\textit{Illinois J. Math.}
\textbf{22} (1978), 262--269.
%
\bibitem{pate91}
\textsc{G. P. Paternain},
Entropy and completely integrable Hamiltonian systems,
\textit{Proc. Amer. Math. Soc.}
\textbf{113} (1991), 871--873.
%
\bibitem{scot83}
\textsc{P. Scott},
The geometries of $3$-manifolds,
\textit{Bull. London Math. Soc.}
\textbf{15} (1983), 401--487.
%
\bibitem{wald67I}
\textsc{F. Waldhausen},
Eine Klasse von $3$-dimensionalen Mannigfaltigkeiten~I,
\textit{Invent. Math.}
\textbf{3} (1967), 308--333.
%
\bibitem{wald67II}
\textsc{F. Waldhausen},
Eine Klasse von $3$-dimensionalen Mannigfaltigkeiten~II,
\textit{Invent. Math.}
\textbf{4} (1967), 87--117.
%
\bibitem{yano85a}
\textsc{K. Yano},
The support of global graph links,
\textit{J. Math. Soc. Japan}
\textbf{37} (1985), 683--702.
%
\bibitem{yano85b}
\textsc{K. Yano},
Homology classes which are represented by graph links,
\textit{Proc. Amer. Math. Soc.}
\textbf{93} (1985), 741--746.
%
\bibitem{yau06}
\textsc{M.-L. Yau},
Vanishing of the contact homology of overtwisted contact $3$-manifolds,
with an appendix by Ya. Eliashberg,
\textit{Bull. Inst. Math. Acad. Sin. (N.S.)}
\textbf{1} (2006), 211--229.
%
\end{thebibliography}
\end{document}